\numberwithin{equation}{section} \theoremstyle{plain}
\newtheorem{theorem}{Theorem}[section]
\newtheorem{lemma}[theorem]{Lemma}
\newtheorem{proposition}[theorem]{Proposition}
\newtheorem{corollary}[theorem]{Corollary}
\newtheorem{definition}[theorem]{Definition}
\theoremstyle{definition}
\theoremstyle{remark}
\newtheorem{remark}[theorem]{Remark}
\numberwithin{equation}{section}
\newcommand{\LDet}{\operatorname{LDet}}
\newcommand{\Det}{\operatorname{Det}}
\newcommand{\even}{\operatorname{even}}
\newcommand{\odd}{\operatorname{odd}}
\newcommand{\Ker}{\operatorname{Ker}}
\newcommand{\im}{\operatorname{Im}}
\newcommand{\nablahsharp}{\nabla^{\mathcal{H},\sharp}}
\begin{document}

\title{Twisted Cappell-Miller holomorphic and analytic torsions}
\author{Rung-Tzung Huang}

\address{Institute of Mathematics, Academia Sinica,
6th Floor, Astronomy-Mathematics Building,
No. 1, Section 4, Roosevelt Road, Taipei, 106-17, Taiwan}

\email{rthuang@math.sinica.edu.tw}

\subjclass[2010]{Primary: 58J52 }
\keywords{determinant, analytic torsion}

\begin{abstract}
Recently, Cappell and Miller extended the classical construction of the analytic torsion for de Rham complexes to coupling with an arbitrary flat bundle and the holomorphic torsion for $\bar{\partial}$-complexes to coupling with an arbitrary holomorphic bundle with compatible connection of type $(1,1)$. Cappell and Miller studied the properties of these torsions, including the behavior under metric deformations. On the other hand, Mathai and Wu generalized the classical construction of the analytic torsion to the twisted de Rham complexes with an odd degree closed form as a flux and later, more generally, to the $\mathbb{Z}_2$-graded elliptic complexes. Mathai and Wu also studied the properties of analytic torsions for the $\mathbb{Z}_2$-graded elliptic complexes, including the behavior under metric and flux deformations. In this paper we define the Cappell-Miller holomorphic torsion for the twisted Dolbeault-type complexes and the Cappell-Miller analytic torsion for the twisted de Rham complexes. We obtain variation formulas for the twisted Cappell-Miller holomorphic and analytic torsions under metric and flux deformations.  
\end{abstract}
\maketitle

\section{Introduction}
In the celebrated works \cite{RS1,RS2}, Ray and Singer defined the analytic torsion for de Rham complexes and the holomorphic torsion for $\bar{\partial}$-complexes of complex manifolds. Ray and Singer studied the properties of analytic and holomorhpic torsions, including the behavior under metric deformations. In their works, Ray and Singer coupled the Riemannian Laplacian and the $\bar{\partial}$-Laplacian, respectively, with unitary flat vector bundles and yielded self-adjoint operators. Hence, the analytic torsion and holomorphic torsion are real numbers in the acyclic cases considered by Ray and Singer and are expressed as elements of real determinant line bundles. 

Recently, Cappell and Miller \cite{CM} extended the classical construction of the analytic torsion to coupling with an arbitrary flat bundle and the holomorphic torsion to coupling with an arbitrary holomorphic bundle with compatible connection of type $(1,1)$. This includes both unitary and flat (not necessarily unitary) bundles as special cases. However, in this general setting, the associated operators are not necessarily self-adjoint and the torsions are complex-valued. Cappell and Miller also studied the properties of their torsions, including the behavior under metric deformations.\textit{\textit{}}

In \cite{MW,MW2} Mathai and Wu generalized the classical construction of the Ray-Singer torsion for de Rham complexes to the twisted de Rham complex with an odd degree closed differential form $H$ as a flux and later, more generally, in \cite{MW3}, to the $\mathbb{Z}_2$-graded elliptic complexes. The definitions use pseudo-differential operators and residue traces. Mathai and Wu also studied propoerties of analtic torsion for $\mathbb{Z}_2$-graded elliptic complexes, including the behavior under the variation of metric and flux. 

Let $E$ be a holomorphic bundle with a compatible type $(1,1)$ connection $D$, cf. Definition \ref{D:compconn1}, over a complex manifold $W$ of complex dimension $n$ and $H \in A^{0,\bar{1}}(W,\mathbb{C})$ be a $\bar{\partial}$ closed differential form, for each $p, 1 \le p \le n$, we define the twisted Cappell-Miller holomorphic torsion, $\tau_{\operatorname{holo},p}(W,E,H)$, cf. Definition \ref{D:maindef1}, as a non-vanishing element of the determinant line
\[
\tau_{\operatorname{holo},p}(W,E,H) \in \Det H^{p,\bullet}_{\bar{\partial}_E}(W,E,H) \otimes [\Det H^{\bullet,n-p}_{D^{1,0}}(W,E,H)]^{(-1)^{n+1}}.
\]
We show that the variation of the twisted Cappell-Miller holomorphic torsion $\tau_{\operatorname{holo},p}(W,E,H)$ under the deformation of the metric is given by a local formula, cf. Theorem \ref{T:vartheorem1}. We also show that along any deformation of $H$ that fixes the cohomology class $[H]$ and the natural identification of determinant lines, the variation of the twisted Cappell-Miller holomorphic torsion $\tau_{\operatorname{holo},p}(W,E,H)$ under the deformation of the flux is given by a local formula, cf. Theorem \ref{T:vartheorem2}.

Let $\mathcal{E}$ be a complex flat vector bundle over a closed manifold $M$ endowed with a flat connection $\nabla$ and let $\mathcal{H}$ be an odd degree flux form.
Then the Cappell-Miller analytic torsion $\tau(\nabla,\mathcal{H})$, cf. Definition \ref{D:cmantorsi}, for the twisted de Rham complexes is an element of $\Det H^\bullet(M,\mathcal{E} \oplus \mathcal{E}',\mathcal{H})$. We show that the variation of the twisted Cappell-Miller analytic torsion $\tau(\nabla,\mathcal{H})$ under the deformation of the metric is given by a local formula, cf. Theorem \ref{T:anvartheorem1}. We also show that along any deformation of $\mathcal{H}$ that fixes the cohomology class $[\mathcal{H}]$ and the natural identification of determinant lines, the varition of the the twisted Cappell-Miller analytic torsion $\tau(\nabla,\mathcal{H})$ under the deformation of the flux is given by a local formula, cf. Theorem \ref{T:anvartheorem1}. In particular, we show that if the manifold $M$ is an odd dimensional closed oriented manifold, then the twisted Cappell-Miller analytic torsion is independent of the Riemannian metric and the representative $\mathcal{H}$ in the cohomology class $[\mathcal{H}]$. See also \cite[Section 6]{Su}. We also compare the twisted Cappell-Miller analytic torsion with the twisted refined analytic torsion \cite{H1}, cf. Theorem \ref{T:cmrefantorcomp}. 

Note that in \cite{H1} the author defined and studied the refined analytic torsion of Braverman and Kappeler \cite{BK1,BK3} for the twisted de Rham complexes. Later, in \cite{Su}, Su defined and studied the Burghelea-Haller analytic torsion \cite{BH1,BH2,BH3} for the twisted de Rham complexes and compared the twisted Burghelea-Haller torsion with the twisted refined analytic torsion. In \cite{Su}, Su also briefly discussed the twisted Cappell-Miller analytic torsion when the dimension of the manifold is odd.

The rest of the paper is organized as follows. In Section 2, we define and calculate the Cappell-Miller torsion for the $\mathbb{Z}_2$-graded finite dimensional bi-graded complex. In section 3, we first define the Dolbeault-type bi-graded complexes twisted by a flux form and its (co)homology groups. We then define the Cappell-Miller holomorphic torsion for the twisted Dolbeault-type bi-graded complexes. We prove variation theorems for the twisted Cappell-Miller holomorphic torsion under metric and flux deformations. In section 4, we first define the de Rham bi-graded complex twisted by a flux form and its (co)homology groups. Then we define the Cappell-Miller analytic torsion for the twisted de Rham bi-graded complex. We prove variation theorems for the twisted Cappell-Miller analytic torsion under metric and flux deformations. 

Throughout this paper, the bar over an integer means taking the value modulo $2$.

\section{The Cappell-Miller torsion for a $\mathbb{Z}_2$-graded finite dimensional bi-graded complex} 
In this section we define and calculate the Cappell-Miller torsion for the $\mathbb{Z}_2$-graded finite dimensional bi-graded complex. For the $\mathbb{Z}$-graded case, cf. \cite[Section 6]{CM}. Throughout this section ${\bf k}$ is a field of characteristic zero.

\subsection{The determinant lines of a $\mathbb{Z}_2$-graded finite dimensional bi-graded complex}
Given a $\bf{k}$-vector space $V$ of dimension $n$, the determinant line of $V$ is the line $\Det(V):=\wedge^nV$, where $\wedge^nV$ denotes the $n$-th exterior power of $V$. By definition, we set $\operatorname{Det}(0):=\bf{k}$. Further, we denote by $\Det(V)^{-1}$ the dual line of $\Det(V)$.  
Denote by 
$$C^{\bar{0}}=C^{\operatorname{even}}=\bigoplus_{i=0}^{[\frac{m}{2}]}C^{2i} , \qquad C^{\bar{1}}=C^{\operatorname{odd}}=\bigoplus_{i=0}^{[\frac{m-1}{2}]}C^{2i+1},$$
where $C^i,i=0,\cdots,m$, are finite dimensional $\bf{k}$-vector spaces. 
Let 
\begin{equation}\label{E:detline} 
 (C^\bullet,d) \  : \  \cdots \stackrel{d}{\longrightarrow} \ C^{\bar{0}} \ \stackrel{d}{\longrightarrow}\ C^{\bar{1}}\ \stackrel{d}{\longrightarrow} \ C^{\bar{0}} \ \stackrel{d}{\longrightarrow} \cdots
\end{equation}
be a $\mathbb{Z}_2$-graded cochain complex of finite dimensional $\bf{k}$-vector spaces. Denote by $H^{\bullet}(d)=H^{\bar 0}(d)\oplus H^{\bar 1}(d)$ its cohomology. Set
\begin{equation}
\Det(C^\bullet)\,:=\,\Det \big( C^{\bar{0}} \big) \otimes \Det \big( C^{\bar{1}} \big)^{-1}, \qquad \Det(H^\bullet(d))\,:=\, \Det( H^{\bar{0}}(d)) \otimes \Det( H^{\bar{1}}(d))^{-1}.
\end{equation}

Now if in addition, $C^\bullet$ has another differential $d^*:C^{\bar{k}} \to C^{\overline{k-1}}$ so that
\[
 (C^\bullet,d^*) \  : \  \cdots \stackrel{d^*}{\longleftarrow} \ C^{\bar{0}} \ \stackrel{d^*}{\longleftarrow}\ C^{\bar{1}}\ \stackrel{d^*}{\longleftarrow} \ C^{\bar{0}} \ \stackrel{d^*}{\longleftarrow} \cdots. 
\]
Denote by $H_\bullet(d^*)=H_{\bar{0}}(d^*) \oplus H_{\bar{1}}(d^*)$ its homology. Set
\[
\Det(H_\bullet(d^*))\,:=\, \Det( H_{\bar{0}}(d^*)) \otimes \Det( H_{\bar{1}}(d^*))^{-1},
\]

\subsection{The fusion isomorphisms}(cf. \cite[Subsection 2.3]{BK3})
For two finite dimensional $\bf{k}$-vector spaces $V$ and $W$, we denote by $\mu_{V,W}$ the canonical fusion isomorphism,
\begin{equation}\label{E:fusio}
\mu_{V,W} : \Det(V) \otimes \Det(W) \to \Det(V \oplus W). 
\end{equation}
For $v \in \Det(V)$, $w \in \Det(W)$, we have
\begin{equation}\label{E:fusio1}
\mu_{V,W}(v \otimes w)= (-1)^{\dim V \cdot \dim W} \mu_{W,V}(w \otimes v).
\end{equation}
By a slight abuse of notation, denote by $\mu_{V,W}^{-1}$ the transpose of the inverse of $\mu_{V,W}$.

Similarly, if $V_1, \cdots, V_r$ are finite dimensional $\bf{k}$-vector spaces, we define an isomorphism
\begin{equation}\label{E:mulfus}
\mu_{V_1,\cdots, V_r}: \Det(V_1) \otimes \cdots \otimes \Det(V_r) \to \Det(V_1 \oplus \cdots \oplus V_r).
\end{equation}

\subsection{The isomorphism between determinant lines}\label{SS:isomo}
For $k=0,1$, fix a direct sum decomposition
\begin{equation}\label{E:decompbha}
C^{\bar{k}}=B^{\bar{k}} \oplus H^{\bar{k}} \oplus A^{\bar{k}},
\end{equation}
such that $B^{\bar{k}}\oplus H^{\bar{k}}=(\Ker d) \cap C^{\bar{k}}$ and $B^{\bar{k}}= d\big(C^{\overline{k-1}}\big)=d\big(A^{\overline{k-1}}\big)$. Then $H^{\bar{k}}$ is naturally isomorphic to the cohomology $H^{\bar{k}}(d)$ and $d$ defines an isomorphism $d: A^{\bar{k}} \to B^{\overline{k+1}}$.

Fix $c_{\bar{k}} \in \Det(C^{\bar{k}})$ and $x_{\bar{k}} \in \Det(A^{\bar{k}})$. Let $d(x_{\bar{k}}) \in \Det(B^{\overline{k+1}})$ denote the image of $x_{\bar{k}}$ under the map $\Det(A^{\bar{k}}) \to \Det(B^{\overline{k+1}})$ induced by the isomorphism $d:A^{\bar{k}} \to B^{\overline{k+1}}$. Then there is a unique element $h_{\bar{k}} \in \Det(H^{\bar{k}})$ such that
\begin{equation}\label{E:cbahce}
c_{\bar{k}}= \mu_{B^{\bar{k}},H^{\bar{k}},A^{\bar{k}}}\big(d(x_{\overline{k-1}})\otimes h_{\bar{k}} \otimes x_{\bar{k}}\big),
\end{equation}
where $\mu_{B^{\bar{k}},H^{\bar{k}},A^{\bar{k}}}$ is the fusion isomorphism, cf. \eqref{E:mulfus}, see also \cite[Subsection 2.3]{BK3}.

Define the canonical isomorphism
\begin{equation}\label{E:isomorphism}
\phi_{C^\bullet}=\phi_{(C^{\bullet},d)} \,:\,\Det(C^{\bullet} ) \longrightarrow \Det(H^{\bullet}(d)),
\end{equation}
by the formula
\begin{equation}\label{E:phic}
\phi_{C^\bullet}: c_{\bar{0}} \otimes c_{\bar{1}}^{-1} \mapsto  h_{\bar{0}} \otimes h_{\bar{1}}^{-1}. 
\end{equation}

Notice that, following the sign convention of \cite[(2-14)]{BK3}, in \cite[(2.10)]{H1} a sign refined version of the canonical isomorphism \eqref{E:isomorphism} was introduced. Here we follow the sign convention of \cite[Section 6]{CM}.

Similarly, for $k=0,1$, fix a direct sum decomposition
\begin{equation}\label{E:decompbhahom}
C^{\bar{k}}=B_{\bar{k}} \oplus H_{\bar{k}} \oplus A_{\bar{k}},
\end{equation}
such that $B_{\bar{k}}\oplus H_{\bar{k}}=(\Ker d^*) \cap C^{\bar{k}}$ and $B_{\bar{k}}= d^*\big(C^{\overline{k+1}}\big)=d^*\big(A_{\overline{k+1}}\big)$. Then $H_{\bar{k}}$ is naturally isomorphic to the homology $H_{\bar{k}}(d^*)$ and $d^*$ defines an isomorphism $d^*: A_{\bar{k}} \to B_{\overline{k-1}}$.

Similarly, fix $c_{\bar{k}} \in \Det(C^{\bar{k}})$ and $y_{\bar{k}} \in \Det(A_{\bar{k}})$. Let $d^*(y_{\bar{k}}) \in \Det(B_{\overline{k-1}})$ denote the image of $y_{\bar{k}}$ under the map $\Det(A_{\bar{k}}) \to \Det(B_{\overline{k-1}})$ induced by the isomorphism $d^*:A_{\bar{k}} \to B_{\overline{k-1}}$. Then there is a unique element $h'_{\bar{k}} \in \Det(H^{\bar{k}})$ such that
\begin{equation}\label{E:cbahceho}
c_{\bar{k}}= \mu_{B_{\bar{k}},H_{\bar{k}},A_{\bar{k}}}\big(d^*(y_{\overline{k+1}})\otimes h'_{\bar{k}} \otimes y_{\bar{k}}\big),
\end{equation}
where $\mu_{B_{\bar{k}},H_{\bar{k}},A_{\bar{k}}}$ is the fusion isomorphism, cf. \eqref{E:mulfus}, see also \cite[Subsection 2.3]{BK3}.

Define the canonical isomorphism
\begin{equation}\label{E:isomorphismho}
\phi'_{C^\bullet}=\phi'_{(C^{\bullet},d^*)} \,:\,\Det(C^{\bullet} ) \longrightarrow \Det(H_{\bullet}(d^*)),
\end{equation}
by the formula
\begin{equation}\label{E:phicho}
\phi'_{C^\bullet}: c_{\bar{0}} \otimes c_{\bar{1}}^{-1} \mapsto  h'_{\bar{0}} \otimes {h'}_{\bar{1}}^{-1}. 
\end{equation}

\subsection{The Cappell-Miller torsion for a $\mathbb{Z}_2$-graded finite dimensional bi-graded complex}
Let $C^\bullet=C^{\bar 0} \oplus C^{\bar 1}$ and $\widetilde{C}^\bullet=\widetilde{C}^{\bar 0} \oplus \widetilde{C}^{\bar 1}$ be finite dimensional $\mathbb{Z}_2$-graded $\bf{k}$-vector spaces. The fusion isomorphism 
\[
\mu_{C^\bullet ,\widetilde{C}^{\bullet}} : \Det(C^{\bullet}) \otimes \Det(\widetilde{C}^{\bullet}) \to \Det(C^{\bullet} \oplus \widetilde{C}^{\bullet}),
\]
is defined by the formula
\begin{equation}\label{E:grafus}
\mu_{C^{\bullet},\widetilde{C}^{\bullet}} := (-1)^{\mathcal{M}(C^{\bullet},\widetilde{C}^{\bullet})}  \mu_{C^{\bar 0},\widetilde{C}^{\bar 0}} \otimes \mu_{C^{\bar 1},\widetilde{C}^{\bar 1}}^{-1},
\end{equation}
where 
\begin{equation}\label{E:mvw123}
\mathcal{M}(C^{ \bullet},\widetilde{C}^{ \bullet}) := \dim C^{\bar 1} \cdot \dim \widetilde{C}^{\bar 0}.
\end{equation}

Consider the element
\[
c\, := \, c_{\bar{0}} \otimes c_{\bar{1}}^{-1} 
\]
of $\Det(C^\bullet)$. Then, for the bi-graded complex $(C^\bullet,d,d^*)$, the Cappell-Miller torsion is the algebraic torsion invariant
\begin{equation}\label{E:tauddstar}
\tau(C^\bullet,d,d^*) \, := \, (-1)^{S(C^\bullet)} \phi_{C^\bullet}(c)(\phi'_{C^\bullet}(c))^{-1} \,  \in \, \Det(H^\bullet(d)) \otimes \Det(H_\bullet(d^*))^{-1},
\end{equation} 
where $(-1)^{S(C^\bullet)}$ is defined by the formula
\begin{equation}\label{E:scbu}
S(C^\bullet):= \sum_{k=0,1}\big[ \dim B_{\overline{k-1}}\cdot \dim B^{\overline{k+1}} + \dim B^{\overline{k+1}}\cdot \dim H_{\bar k} + \dim B_{\overline{k-1}} \cdot \dim H^{\bar k}\big].
\end{equation}

\subsection{Calculation of the $\mathbb{Z}_2$-graded Cappell-Miller torsion}

In this subsection we compute the $\mathbb{Z}_2$-graded Cappell-Miller torsion. We first compute the case that the combinatorial Laplacian $\Delta:=d^*d+dd^*$ is bijective.

For $k=0,1$, define
\begin{equation}\label{E:cplmidec}
C_+^{\bar k}:= \Ker d^* \cap C^{\bar k}, \qquad C_-^{\bar k}:= \Ker d \cap C^{\bar k}.
\end{equation}
The proof of the following proposition is similar to the proof of the $\mathbb{Z}$-graded case, \cite[Subsection 6.2, Claim B]{CM}.

\begin{proposition}\label{P:acydob1}
Suppose that the combinatorial Laplacian $\Delta$ has no zero eigenvalue. Then the cohomology group $H^{\bullet}(d)=0$ and the homology group $H_{\bullet}(d^*)=0$. Moreover, 
\begin{equation}\label{E:acytauddsta}
\tau(C^\bullet,d,d^*)=\Det(d^*d|_{C_+^{\bar 0}}) \cdot \Det(d^*d|_{C_+^{\bar 1}})^{-1},
\end{equation} 
\end{proposition}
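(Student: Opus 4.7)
First I would deduce the vanishing of $H^\bullet(d)$ and $H_\bullet(d^*)$ directly from the invertibility of $\Delta=d^*d+dd^*$. Since $\Delta$ commutes with both $d$ and $d^*$, so does $\Delta^{-1}$; for $\alpha\in \Ker d\cap C^{\bar k}$ the identity
\[
\alpha\,=\,\Delta\Delta^{-1}\alpha\,=\,d(d^*\Delta^{-1}\alpha)\,+\,d^*(d\Delta^{-1}\alpha),
\]
combined with $d\Delta^{-1}\alpha=\Delta^{-1}d\alpha=0$, shows that $\alpha=d(d^*\Delta^{-1}\alpha)$ is exact, so $H^\bullet(d)=0$. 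Exchanging the roles of $d$ and $d^*$ yields $H_\bullet(d^*)=0$ by the same argument, and along the way the identity $\beta=dd^*\Delta^{-1}\beta+d^*d\Delta^{-1}\beta$ together with $\Ker d\cap \Ker d^*=\Ker\Delta=0$ furnishes the Hodge-type splitting $C^{\bar k}=C_-^{\bar k}\oplus C_+^{\bar k}$.

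Second I would choose the decompositions \eqref{E:decompbha} and \eqref{E:decompbhahom} adapted to this splitting. Take $A^{\bar k}:=C_+^{\bar k}$, which (using $H^{\bar k}=0$) forces $B^{\bar k}=C_-^{\bar k}$; the restriction $d\colon C_+^{\bar k}\to C_-^{\overline{k+1}}$ is then injective, and surjective because for any $\gamma=d\beta\in C_-^{\overline{k+1}}$ the element $\beta':=d^*d\Delta^{-1}\beta$ lies in $C_+^{\bar k}$ and satisfies $d\beta'=d(\Delta-dd^*)\Delta^{-1}\beta=\gamma$. Symmetrically, $A_{\bar k}:=C_-^{\bar k}$ and $B_{\bar k}=C_+^{\bar k}$ is a valid choice for the $d^*$-decomposition.

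Third I would evaluate both $\phi_{C^\bullet}$ and $\phi'_{C^\bullet}$ on a single reference element. Fix $c_+^{\bar k}\in\Det(C_+^{\bar k})$ and set
\[
c_{\bar k}\,:=\,\mu_{C_-^{\bar k},C_+^{\bar k}}\bigl(d(c_+^{\overline{k-1}})\otimes c_+^{\bar k}\bigr).
\]
Taking $x_{\bar k}:=c_+^{\bar k}$ in \eqref{E:cbahce} forces $h_{\bar k}=1$, hence $\phi_{C^\bullet}(c_{\bar 0}\otimes c_{\bar 1}^{-1})=1$. Taking $y_{\bar k}:=d(c_+^{\overline{k-1}})$ in \eqref{E:cbahceho} gives $d^*(y_{\overline{k+1}})=d^*d(c_+^{\bar k})=\Det(d^*d|_{C_+^{\bar k}})\cdot c_+^{\bar k}$; swapping the order of the $B_{\bar k}$ and $A_{\bar k}$ factors via \eqref{E:fusio1} then reads off
\[
h'_{\bar k}\,=\,(-1)^{\dim C_+^{\bar k}\cdot \dim C_-^{\bar k}}\,\Det(d^*d|_{C_+^{\bar k}})^{-1}.
\]

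Finally, inserting these into \eqref{E:tauddstar} and observing that with vanishing (co)homology the expression \eqref{E:scbu} collapses to $S(C^\bullet)=\dim C_+^{\bar 0}\dim C_-^{\bar 0}+\dim C_+^{\bar 1}\dim C_-^{\bar 1}$, the accumulated fusion signs cancel exactly against $(-1)^{S(C^\bullet)}$, yielding \eqref{E:acytauddsta}. The main obstacle I anticipate is the fusion-sign bookkeeping: the $d$- and $d^*$-decompositions share the same summands $C_\pm^{\bar k}$ but with the $(B,A)$-ordering reversed, and verifying that $S(C^\bullet)$ is engineered precisely to absorb the swap sign $(-1)^{\dim C_+^{\bar k}\dim C_-^{\bar k}}$ in each degree is the crux of the calculation.
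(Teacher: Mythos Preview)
Your proposal is correct and follows essentially the same approach as the paper: both arguments establish the splitting $C^{\bar k}=C_-^{\bar k}\oplus C_+^{\bar k}$, take $A^{\bar k}=C_+^{\bar k}=B_{\bar k}$ and $A_{\bar k}=C_-^{\bar k}=B^{\bar k}$, choose a reference element $c_{\bar k}$ adapted to one of the two fusion orderings, and then read off the determinant of $d^*d$ together with a swap sign that is cancelled by $(-1)^{S(C^\bullet)}$. The only cosmetic difference is that the paper normalizes $c_{\bar k}$ so that $h'_{\bar k}=1$ and computes $h_{\bar k}$, whereas you normalize so that $h_{\bar k}=1$ and compute $h'_{\bar k}$; the sign bookkeeping is identical either way.
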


\begin{proof}
If the combinatorial Laplacian $\Delta$ has no zero eigenvalue, then, for each $k=0,1$, $\Delta$ is an isomorphism from $C^{\bar k}$ to $C^{\bar k}$. Hence, for each $x \in C^{\bar k}$, there is a $y \in C^{\bar k}$ such that $$x = \Delta y = (dd^*+d^*d)y.$$ This implies that $$C^{\bar k}=d^*C^{\overline{k+1}}+dC^{\overline{k-1}}.$$ 

If $x \in d^* C^{\overline{k+1}} \cap d C^{\overline{k-1}}$, then, by the facts that $d^* d^*=0$ and $dd=0$, we have $d^*x=0$ and $dx=0$. Hence, we have $$\Delta x = (dd^*+d^*d)x=0,$$ which implies that $x=0$ by the assumption. We conclude that, for each $k=0,1$, we have the following direct sum decomposition
\begin{equation}\label{E:splitck}
C^{\bar k}=d^*C^{\overline{k+1}} \oplus dC^{\overline{k-1}}.
\end{equation} 

By the facts that 
\[
d^*\Delta=d^*dd^*=\Delta d^*, \qquad d\Delta=dd^*d=\Delta d,
\]
we know that the isomorphism $\Delta$ preserves the splitting \eqref{E:splitck}. Hence $\Delta$ maps $d^*C^{\overline{k+1}}$ and $dC^{\overline{k-1}}$ isomorphically to themselves. Since 
\[
\Delta|_{d^*C^{\overline{k+1}}}=d^*d|_{d^*C^{\overline{k+1}}}, \qquad \Delta|_{d_{\overline{k}}C^{\overline{k}}}=dd^*|_{d_{\overline{k}}C^{\overline{k}}}, 
\]
the maps $d^*d$ and $dd^*$ are isomorphisms on $d^*C^{\overline{k+1}}$ and $dC^{\overline{k}}$, respectively. In particular, $d:d^*C^{\overline{k+1}} \to dC^{\bar k}$ and $d^*:dC^{\bar k} \to d^*C^{\overline{k+1}}$ are injections, respectively, and the composite $d^*d$ is an isomorphism from $d^*C^{\overline{k+1}}$ to itself. Hence we have the following isomorphisms 
\begin{equation}\label{E:dkdsatrk0}
d: d^*C^{\overline{k+1}} \cong dC^{\bar k}, \qquad  d^*: dC^{\bar k} \cong d^*C^{\overline{k+1}} 
\end{equation}
and, in particular, $H_{\bullet}(d^*)=0$ and $H^{\bullet}(d)=0$. This proves the first assertion.

To compute $\tau(C^\bullet,d,d^*)$, cf. \eqref{E:tauddstar}, we first compute $\phi'_{C^\bullet}(c)$. 
By \eqref{E:cplmidec}, \eqref{E:splitck} and \eqref{E:dkdsatrk0},
we know that 
\begin{equation}\label{E:cpluminre}
C_+^{\bar k}=d^*C^{\overline{k+1}}, \qquad   C_-^{\bar k}=dC^{\overline{k-1}}.
\end{equation}
By \eqref{E:decompbha}, \eqref{E:decompbhahom}, \eqref{E:dkdsatrk0}, \eqref{E:cpluminre} and the first assertion, we know that 
\begin{equation}\label{E:baisomorphism}
C_+^{\bar k}=B_{\bar k} \cong A^{\overline{k}}, \qquad C_-^{\bar k}=B^{\bar k} \cong A_{\overline{k}}. 
\end{equation}
Let $\{ d^*y_{\overline{k+1},i} | 1 \le i \le \dim B_{\bar k}\}$ be a basis for $B_{\bar k}=d^*C^{\overline{k+1}} \cong A^{\bar k}.$ Since 
\[
d^*d: d^*C^{\overline{k+1}} \to d^*C^{\overline{k+1}}
\]
is an isomorphism, there is a unique vector 
\[
x_{\bar{k},i} \in B_{\bar k}=d^*C^{\overline{k+1}}
\] 
such that 
\begin{equation}\label{E:dstaktx}
d^*dx_{\bar{k},i}=d^*y_{\overline{k+1},i}.
\end{equation}
Then $\{x_{\overline{k},i} | 1 \le i \le \dim B_{\bar k}\}$ is also a basis for $B_{\bar k}\cong A^{\bar k}$. By the fact that $d:d^*C^{\overline{k+1}} \to dC^{\bar k}$ is an isomorphism, it follows that $\{dx_{\overline{k},i} | 1 \le i \le \dim B_{\bar k}\}$ is a basis for $B^{\overline{k+1}}=dC^{\bar k} \cong A_{\overline{k+1}}$.
Hence, in view of the decomposition \eqref{E:splitck}, we conclude that 
\[
\{ d^*y_{\overline{k+1},i} | 1 \le i \le \dim B_{\bar k}\} \cup \{dx_{\overline{k-1},i} | 1 \le i \le \dim B_{\overline{k-1}}\}
\]
forms a basis for $C^{\bar k}$. In particular, by the first assertion and \eqref{E:decompbha}, we have
\begin{equation}\label{E:dimequabb}
\dim B^{\bar k} = \dim B_{\overline{k-1}}.
\end{equation}
With this particular choice of basis, we set 
\[
y_{\overline{k+1}}:= y_{\overline{k+1},1} \wedge \cdots \wedge y_{\overline{k+1},\dim B_{\bar k}} \in \Det(A_{\overline{k+1}})
\]
and 
\[
x_{\overline{k-1}}:= x_{\overline{k-1},1} \wedge \cdots \wedge x_{\overline{k-1},\dim B_{\overline{k-1}}} \in \Det(B_{\overline{k-1}}).
\]
Let $d^*y_{\overline{k+1}}$ be the induced element in $\Det(B_{\bar k})$ and $dx_{\overline{k-1}}$ be the induced element in $\Det(A_{\bar k})$. Set 
\begin{equation}\label{E:cbarkcho}
c_{\bar k}=\mu_{B_{\bar k}, A_{\bar k}}(d^*y_{\overline{k+1}} \otimes dx_{\overline{k-1}}).
\end{equation}
To compute $\phi'_{C^\bullet}(c)$, cf. \eqref{E:phicho}, we need to compute the element $h'_{\bar{k}} \in \Det(H_{\bar{k}}(d^*) \cong \bf{k}$. 

If $L$ is a complex line and $x,y \in L$ with $y \not=0$, we denote by $[x:y] \in \bf{k}$ the unique number such that $x=[x:y]y$. Then
\begin{align}\label{E:wihheo}
h'_{\bar k}& = [c_{\bar k}: \mu_{B_{\bar k}, A_{\bar k}}(d^*y_{\overline{k+1}} \otimes dx_{\overline{k-1}})] \quad \text{by} \ \eqref{E:dstaktx} \nonumber \\
 & =  [\mu_{B_{\bar k}, A_{\bar k}}(d^*y_{\overline{k+1}} \otimes dx_{\overline{k-1}}): \mu_{B_{\bar k}, A_{\bar k}}(d^*y_{\overline{k+1}} \otimes dx_{\overline{k-1}})] \quad \text{by} \ \eqref{E:cbarkcho} \\
 & =  1. \nonumber
\end{align}

We next compute $\phi_{C^\bullet}(c)$. By \eqref{E:phic}, we need to compute $h_{\bar k}$. By the above choice of basis, we have
\begin{align}\label{E:wihheo1}
h_{\bar k} & =  [c_{\bar k}: \mu_{B^{\bar k}, A^{\bar k}}(dx_{\overline{k-1}}) \otimes x_{\bar k}] \nonumber \\
 & =  [\mu_{B_{\bar k}, A_{\bar k}}(d^*y_{\overline{k+1}} \otimes dx_{\overline{k-1}}): \mu_{B^{\bar k}, A^{\bar k}}(dx_{\overline{k-1}}) \otimes x_{\bar k}] \quad \text{by} \ \eqref{E:cbarkcho} \nonumber \\
& =  [\mu_{B_{\bar k}, A_{\bar k}}(d^*dx_{\bar k} \otimes dx_{\overline{k-1}}): \mu_{A_{\bar k}, B_{\bar k}}(dx_{\overline{k-1}}) \otimes x_{\bar k}] \quad \text{by} \ \eqref{E:baisomorphism}, \eqref{E:dstaktx} \\
& =  (-1)^{\dim B_{\bar k} \dim A_{\overline{k}}}[\mu_{B_{\bar k}, A_{\bar k}}(d^*dx_{\bar k} \otimes dx_{\overline{k-1}}): \mu_{B_{\bar k}, A_{\bar k}}( x_{\bar k} \otimes dx_{\overline{k-1}})] \nonumber \\
& =  (-1)^{\dim B_{\overline{k-1}} \dim B^{\overline{k+1}}} \Det(d^*d|_{C_+^{\bar k}}), \quad \text{by} \ \eqref{E:baisomorphism}, \eqref{E:dimequabb}. \nonumber 
\end{align}
By combining \eqref{E:tauddstar}, \eqref{E:scbu}, \eqref{E:wihheo}, \eqref{E:wihheo1} with the first assertion, we obtain \eqref{E:acytauddsta}.
\end{proof}

We now compute the case that the combinatorial Laplacian $\Delta:=d^*d+dd^*$ is {\em not} bijective.
Note that the operator $\Delta$ maps $C^{\bar{k}}$ into itself. For an arbitrary interval $\mathcal{I}$, denote by $C^{\bar{k}}_{\mathcal{I}} \subset C^{\bar{k}}$ the linear span of the generalized eigenvectors of the restriction of $\Delta$ to $C^{\bar{k}}$, corresponding to eigenvalue $\lambda$ with $\lambda \in \mathcal{I}$. Since both $d$ and $d^*$ commute with $\Delta$, $d (C^{\bar{k}}_{\mathcal{I}}) \subset C^{\overline{k+1}}_{\mathcal{I}}$ and $d^*(C^{\bar{k}}_{\mathcal{I}}) \subset C^{\overline{k-1}}_{\mathcal{I}}$. Hence, we obtain a subcomplex $C^{\bullet}_{\mathcal{I}}$ of $C^{\bullet}$. We denote by $H^{\bullet}_{\mathcal{I}}(d)$ the cohomology of the complex $(C^\bullet_{\mathcal{I}},d_{\mathcal{I}})$ and $H_{\bullet,\mathcal{I}}(d^*)$ the homology of the complex $(C^\bullet_{\mathcal{I}},d^*_{\mathcal{I}})$. Denote by $d_{\mathcal{I}}$  and $d^*_{\mathcal{I}}$ the restrictions of $d$ and $d^*$ to $C^{\bar{k}}_{\mathcal{I}}$ and denote by $\Delta_{\mathcal{I}}$ the restriction of $\Delta$ to $C^{\bar{k}}_{\mathcal{I}}$. Then $\Delta_{\mathcal{I}} = d^*_{\mathcal{I}} d_{\mathcal{I}}+d_{\mathcal{I}}d^*_{\mathcal{I}}$. For $k=0,1$, we also denote by $C_{\pm,\mathcal{I}}^{\bar k}$ the restrictions of $C_{\pm}^{\bar k}$ to $C_{\mathcal{I}}^{\bar k}$.


For each $\lambda \ge 0$, we have $C^{\bullet}=C^{\bullet}_{[0,\lambda]} \oplus C^{\bullet}_{(\lambda,\infty)}$. Then $H^{\bullet}(d)=0$ whereas $H^{\bullet}_{[0,\lambda]}(d) \cong H^{\bullet}(d)$ and $H_{\bullet}(d^*)=0$ whereas $H_{\bullet,[0,\lambda]}(d^*) \cong H_{\bullet}(d^*)$. Hence there are canonical isomorphisms
\[
\Phi_\lambda : \Det(H^{\bullet}_{(\lambda,\infty)}(d)) \to \mathbb{C}, \qquad \Psi_{\lambda}: \Det(H^{\bullet}_{[0,\lambda]}(d)) \to \Det(H^{\bullet}(d)).
\]
and
\[
\Phi'_\lambda : \Det(H_{\bullet,(\lambda,\infty)}(d^*)) \to \mathbb{C}, \qquad {\Psi'}^*_{\lambda}: \Det(H_{\bullet,[0,\lambda]}(d^*))^{-1} \to \Det(H_{\bullet}(d^*))^{-1}.
\]
In the sequel, we will write $t$ for $\Phi_\lambda(t) \in \mathbb{C}$ and $t'$ for $\Phi'_\lambda(t') \in \mathbb{C}$. 

\begin{proposition}\label{P:indedec}
Let $(C^{\bullet},d,d^*)$ be a $\mathbb{Z}_2$-graded bi-graded complex of finite dimensional $\bf{k}$-vector spaces. Then, for each $\lambda \ge 0$,
\begin{equation}\label{E:taucddstarinde}
\tau(C^\bullet,d,d^*)= \Det \big( d^*d|_{C_{+,(\lambda,\infty)}^{\bar 0}}\big) \cdot \Det\big(d^*d|_{C_{+,(\lambda,\infty)}^{\bar 1}}\big)^{-1} \cdot \tau(C^\bullet_{[0,\lambda]},d,d^*),
\end{equation}
where we view $\tau(C^\bullet_{[0,\lambda]},d,d^*)$ as an element of $\Det(H^{\bullet}(d))\otimes \Det(H_{\bullet}(d^*))^{-1}$ via the canonical isomorphism $\Psi_{\lambda} \otimes {\Psi'}^*_{\lambda}: \Det(H^{\bullet}_{[0,\lambda]}(d))\otimes \Det(H_{\bullet,[0,\lambda]}(d^*))^{-1} \to \Det(H^{\bullet}(d))\otimes \Det(H_{\bullet}(d^*))^{-1}$.

In particular, the right side of \eqref{E:taucddstarinde} is indenpendent of $\lambda \ge 0$.
\end{proposition}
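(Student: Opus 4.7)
The plan is to split the complex spectrally, reduce to Proposition~\ref{P:acydob1} on the ``large eigenvalue'' part, and account for signs via a multiplicativity lemma for the Cappell--Miller torsion under direct sums of $\mathbb{Z}_2$-graded bi-graded complexes.

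First I would verify that the spectral decomposition produces a genuine direct sum of bi-graded subcomplexes. Since $d$ and $d^*$ both commute with $\Delta$ (because $\Delta d = dd^*d = d\Delta$ and similarly for $d^*$), they commute with the spectral projectors onto the generalized eigenspaces, so
\[
(C^\bullet, d, d^*) \;=\; (C^\bullet_{[0,\lambda]}, d_{[0,\lambda]}, d^*_{[0,\lambda]}) \;\oplus\; (C^\bullet_{(\lambda,\infty)}, d_{(\lambda,\infty)}, d^*_{(\lambda,\infty)})
\]
as bi-graded complexes. Consequently $H^\bullet(d) \cong H^\bullet_{[0,\lambda]}(d) \oplus H^\bullet_{(\lambda,\infty)}(d)$ and $H_\bullet(d^*) \cong H_{\bullet,[0,\lambda]}(d^*) \oplus H_{\bullet,(\lambda,\infty)}(d^*)$, and on the $(\lambda,\infty)$ summand the Laplacian $\Delta$ is invertible, so Proposition~\ref{P:acydob1} yields
\[
\tau(C^\bullet_{(\lambda,\infty)}, d, d^*) \;=\; \Det\bigl(d^*d|_{C^{\bar 0}_{+,(\lambda,\infty)}}\bigr) \cdot \Det\bigl(d^*d|_{C^{\bar 1}_{+,(\lambda,\infty)}}\bigr)^{-1},
\]
with vanishing cohomology and homology.

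The central step is a multiplicativity lemma: for any direct sum $(C^\bullet \oplus \widetilde C^\bullet, d \oplus \widetilde d, d^* \oplus \widetilde{d}^*)$ of $\mathbb{Z}_2$-graded bi-graded complexes, under the canonical identifications induced by the fusion isomorphisms \eqref{E:grafus} applied to $\Det(H^\bullet(\cdot))$ and $\Det(H_\bullet(\cdot))^{-1}$, one has
\[
\tau(C^\bullet \oplus \widetilde C^\bullet, d \oplus \widetilde d, d^* \oplus \widetilde{d}^*) \;=\; \tau(C^\bullet, d, d^*) \cdot \tau(\widetilde C^\bullet, \widetilde d, \widetilde{d}^*).
\]
To prove this I would choose splittings \eqref{E:decompbha} and \eqref{E:decompbhahom} for each summand, take their direct sum to obtain splittings for the total complex, apply \eqref{E:cbahce} and \eqref{E:cbahceho} on each piece separately, and then fuse. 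Applying the lemma to our spectral decomposition and combining with the formula above for the $(\lambda,\infty)$ piece then gives \eqref{E:taucddstarinde}. The independence of $\lambda$ is automatic since the left-hand side does not depend on $\lambda$.

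The main obstacle is bookkeeping the signs in the multiplicativity lemma. The definition \eqref{E:tauddstar} carries the sign $(-1)^{S(C^\bullet)}$ from \eqref{E:scbu}, which is quadratic in the dimensions of the $B$'s and $H$'s; when these split as $B = B^s \oplus B^r$, $H = H^s \oplus H^r$ (with superscripts referring to the small and large spectral parts), the expression $S(C^\bullet)$ produces cross-terms $\dim B^s \cdot \dim B^r$ etc.\ beyond $S(C^\bullet_{[0,\lambda]}) + S(C^\bullet_{(\lambda,\infty)})$. These cross-terms must be shown to cancel exactly against the $\mathbb{Z}_2$-graded fusion signs $(-1)^{\mathcal{M}(\cdot,\cdot)}$ of \eqref{E:mvw123} that arise from three sources: the fusion used to identify $\Det(C^\bullet)$ with $\Det(C^\bullet_{[0,\lambda]}) \otimes \Det(C^\bullet_{(\lambda,\infty)})$, and those used to identify the direct-sum decompositions of $\Det(H^\bullet(d))$ and $\Det(H_\bullet(d^*))^{-1}$. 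Once this sign cancellation is verified, the rest is formal.
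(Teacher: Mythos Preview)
Your proposal is correct and follows essentially the same approach as the paper: split spectrally, apply Proposition~\ref{P:acydob1} to the $(\lambda,\infty)$ part, and combine via the fusion isomorphisms on determinant lines. The paper's proof is a one-sentence appeal to \eqref{E:tauddstar}, Proposition~\ref{P:acydob1}, and the fusion isomorphisms \eqref{E:isofu}--\eqref{E:isofu1}, without spelling out the sign bookkeeping you flag; your discussion of the cross-terms in $S(C^\bullet)$ versus the fusion signs $\mathcal{M}(\cdot,\cdot)$ is exactly the content the paper leaves implicit.
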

\begin{proof}
Recall the natural isomorphism 
\begin{equation}\label{E:isofu}
\Det(H^{\bar{k}}_{[0,\lambda]}(d) \otimes H^{\bar{k}}_{(\lambda,\infty)}(d)) \cong \Det(H^{\bar{k}}_{[0,\lambda]}(d) \oplus H^{\bar{k}}_{(\lambda,\infty)}(d))=\Det(H^{\bar{k}}(d))
\end{equation}
and
\begin{equation}\label{E:isofu1}
\Det(H_{\bar{k},[0,\lambda]}(d^*) \otimes H_{\bar{k},(\lambda,\infty)}(d^*)) \cong \Det(H_{\bar{k},[0,\lambda]}(d^*) \oplus H_{\bar{k},(\lambda,\infty)}(d^*))=\Det(H_{\bar{k}}(d^*))
\end{equation}
From~\eqref{E:tauddstar}, Proposition~\ref{P:acydob1}, \eqref{E:isofu} and \eqref{E:isofu1} we obtain the result.
\end{proof}

\section{Twisted Cappell-Miller holomorphic torsion}

In this section we first review the $\bar{\partial}$-Laplacian for a holomorphic bundle with compatible type (1,1) connection introduced in \cite{CM}. Then we define the Dolbeault-type bi-graded complexes twisted by a flux form and its cohomology and homology groups. We define the Cappell-Miller holomorphic torsion for the twisted Dolbeault-type bi-graded complexes. We also prove variation theorems for the twisted Cappell-Miller holomorphic torsion under metric and flux deformations. 

\subsection{The $\bar{\partial}$-Laplacian for a holomorphic bundle with compatible type (1,1) connection}
In this subsection we review some materials from \cite{CM}, see also \cite{LY}.

Let $(W,J)$ be a complex manifold of complex dimension $n$ with the complex structure $J$ and let $g^{W}$ be any Hermitian metric on $TW$. Let $E \to W$ be a holomorphic bundle over $W$ endowed with a linear connection $D$ and let $h^E$ be a Hermitian metric on $E$. 

The complex structure $J$ induces a splitting $TW\otimes_{\mathbb{R}}\mathbb{C}=T^{(1,0)}W \oplus T^{(0,1)}W$, where $T^{(1,0)}W$ and $T^{(0,1)}W$ are eigenbundles of $J$ corresponding to eigenvalues $i$ and $-i$, respectively. Let $T^{*(1,0)}W$ and $T^{*(0,1)}W$ be the corresponding dual bundles. For $0 \le p,q \le n$, let
\[
A^{p,q}(W,E)=\Gamma \big( W, \wedge^p(T^{*(1,0)}W)\otimes\wedge^q(T^{*(0,1)}W)\otimes E \big)
\]
be the space of smooth $(p,q)$-forms on $W$ with values in $E$. Set 
\[
A^{\bullet,\bullet}(W,E)=\bigoplus_{p,q=0}^n A^{p,q}(W,E).
\]
Let $\bar{\partial}: A^{p,q}(W,\mathbb{C}) \to W^{p,q+1}(W,\mathbb{C})$ and $\partial: A^{p,q}(W,\mathbb{C}) \to A^{p+1,q}(W,\mathbb{C})$ be the standard operators obtained by decomposing, by type, the exterior derivative 
\[
d=\bar{\partial}+\partial
\]
acting on complex-valued smooth forms of type $(p,q)$. By $d^2=0$, we have $\bar{\partial}^2=0, \, \partial^2=0$. 

Since $E$ is holomorphic, the operator on $A^{\bullet,\bullet}(W,E)$ has a unique natural extension to $A^{\bullet,\bullet}(W,E)$, cf. \cite[P.139]{CM},
\[
\bar{\partial}_E : A^{p,q}(W,E) \to W^{p,q+1}(W,E).
\]

Under the splitting $\Gamma(W, (T^*W \otimes_{\mathbb{R}}\mathbb{C})\otimes_{\mathbb{C}}E)=A^{1,0}(W,E) \oplus A^{0,1}(W,E)$, the connection $D$ decomposes as a sum, $D=D^{1,0} \oplus D^{0,1}$ with
\[
D^{1,0}:\Gamma(W,E) \to A^{1,0}(W,E), \quad D^{0,1}: \Gamma(W,E) \to A^{0,1}(W,E).
\]
Extend the connection $D$ on $\Gamma(W,E)$ in a unique way to $A^{\bullet,\bullet}(W,E)$ by the Leibniz formula, cf. \cite[P. 21]{BGV}, then the extended $D$ again decomposes as a sum $D = D^{1,0} + D^{0,1}$, which also satisfy the Leibniz formula, cf. \cite[P. 131]{BGV}.   

We recall the following definition, cf. \cite[P.139-140]{CM} or \cite[Definition 2.1]{LY}.
\begin{definition}\label{D:compconn1}
The connection $D$ is said to be {\em compatible} with the holomorphic structure on $E$ if $D^{0,1}=\bar{\partial}_E$. The coonection $D$ is said to be of {\em type} $(1,1)$ if the curvature $D^2$ is of type $(1,1)$, that is, $(D^{1,0})^2=0$ and $(D^{0,1})^2=0$.
\end{definition}

The complex Hodge star operator $\star$ acting on forms is a complex conjugate linear mapping
\[
\star:A^{p,q}(W,\mathbb{C}) \to A^{n-p,n-q}(W,\mathbb{C})
\]
induced by a conjugate linear bundle isomorphism, cf. \cite[P. 141]{CM}. 

The natural conjugate mapping 
\[
\operatorname{conj}: A^{p,q}(W,\mathbb{C}) \to A^{q,p}(W,\mathbb{C})
\]
is a complex linear mapping, induced by the bundle automorphism, cf. \cite[P. 141]{CM},
\[
T^*W\otimes_{\mathbb{R}}\mathbb{C} \to T^*W\otimes_{\mathbb{R}}\mathbb{C}, \ v \otimes \lambda \mapsto v \otimes \bar{\lambda}, \quad \ v \in T^*W, \, \lambda \in \mathbb{C},
\]
of the complexified cotangent bundle. Denote by $\hat{\star}:=\operatorname{conj}\star$. Then
\[
\hat{\star}=\operatorname{conj}\star : A^{p,q}(W,\mathbb{C}) \to A^{n-q,n-p}(W,\mathbb{C})
\]
is a complex linear mapping. Clearly, $\hat{\star}=\operatorname{conj}\star=\star\operatorname{conj}$.

As pointed out in \cite[P. 141]{CM} that $\hat{\star}$ being complex linear may be coupled to a complex linear bundle mapping, such as the identity mapping. We also denote by $\hat{\star}$ the complex linear mapping
\[
\hat{\star} : A^{p,q}(W,E) \to A^{n-q,n-p}(W,E).
\]

Recall that the adjoint $\bar{\partial}^*$ of $\bar{\partial}$ with respect to the chosen Hermitian inner product on $TW$ is given by
\[
\bar{\partial}^* = - \star \bar{\partial} \star.
\]
In particular,
\[
\bar{\partial}^*=-\hat{\star}\operatorname{conj} \bar{\partial} \operatorname{conj}\hat{\star}=-\hat{\star} \partial \hat{\star}.
\]

Let $D$ be a compatible $(1,1)$ connection. Following \cite[P. 141]{CM}, we define 
\[
\bar{\partial}^*_{E,D^{1,0}}=-\hat{\star} D^{1,0} \hat{\star} 
\]
and the $\bar{\partial}$-Laplacian for the holomorphic bundle $E$ with compatible type $(1,1)$ connection $D$ by
\[
\square_{E,\bar{\partial}} = \bar{\partial}_E \bar{\partial}^*_{E,D^{1,0}} + \bar{\partial}^*_{E,D^{1,0}}\bar{\partial}_E.
\]
Note that $(\bar{\partial}^*_{E,D^{1,0}})^2=0$, since $(D^{1,0})^2=0$ and $\hat{\star}^2=\star^2=\pm 1$.

Denote by $\delta_E$ the adjoint of the $\bar{\partial}$-operator $\bar{\partial}_E$ with respect to the inner product $<\cdot,\cdot>_E$ on $A^{\bullet,\bullet}(W,E)$ induced by the Hermitian metrics $g^W$ and $h^E$. Then the associated self-adjoint $\bar{\partial}$-Laplacian is defined as
\[
\square_E = (\bar{\partial}_E + \delta_E)^2 = \bar{\partial}_E \delta_E + \delta_E \bar{\partial}_E.
\]

Recall that, in general, the operator $\square_{E,\bar{\partial}}$ is not self-adjoint with respect to the inner product $<\cdot,\cdot>_E$ on $A^{\bullet,\bullet}(W,E)$, but has the same leading symbol as the operator $\square_E$, cf. \cite[Section 3]{CM}. When the connection on $E$ is compatible with the Hermitian inner product $<\cdot,\cdot>_E$ on $A^{\bullet,\bullet}(W,E)$, the operator $\square_{E,\bar{\partial}}$ recovers the self-adjoint operators considered by Bismut, Gillet, Lebeau, and Soul\'{e} \cite{B1,BGS1,BGS2,BGS3,BGS4,BL1,BL2}. When the bundle $E$ is unitary flat, the operator $\square_{E,\bar{\partial}}$ recovers the self-adjoint operators of Ray and Singer, \cite{RS2}. For more details about the operator $\square_{E,\bar{\partial}}$, cf. \cite{CM}.

\subsection{Twisted Dolbeault-type cohomology and homology groups}

For each $0 \le p \le n$, denote by $A^{p,\bar{0}}(W,E):=A^{p,\operatorname{even}}(W,E)$ and $A^{p,\bar{1}}(W,E):=A^{p,\operatorname{odd}}(W,E)$. Let $H \in A^{0,\bar{1}}(W,\mathbb{C})$ and $\bar{\partial}_E^H:=\bar{\partial}_E+H \wedge \cdot$. We assume that $\bar{\partial} H=0$, then, as in the de Rham case, $(\bar{\partial}_E^H)^2=0$. Hence, we can consider the following twisted complex:
\[
\big(A^{p,\bullet}(W,E), \bar{\partial}_E^H  \big): \cdots \stackrel{\bar{\partial}_E^H}{\longrightarrow} A^{p,\bar{0}}(W,E)\stackrel{\bar{\partial}_E^H}{\longrightarrow} A^{p,\bar{1}}(W,E)\stackrel{\bar{\partial}_E^H}{\longrightarrow} A^{p,\bar{0}}(W,E)\stackrel{\bar{\partial}_E^H}{\longrightarrow} \cdots.
\]
We define the twisted Dolbeault-type cohomology groups of $\big(A^{p,\bullet}(W,E), \bar{\partial}_E^H  \big)$ as
\[
H^{p,\bar{k}}_{\bar{\partial}_E}(W,E,H):=\frac{\Ker(\bar{\partial}_E^H:A^{p,\bar{k}}(W,E) \to A^{p,\overline{k+1}}(W,E))}{\operatorname{Im}(\bar{\partial}_E^H:A^{p,\overline{k-1}}(W,E) \to A^{p,\overline{k}}(W,E))}, \quad k=0,1.
\]

Denote by $\bar{H}:=\operatorname{conj} H $. Let $D^{1,0}_{\bar{H}}:=D^{1,0}+ \bar{H} \wedge \cdot$, then $(D^{1,0}_{\bar{H}})^2=0$. Hence, we can also consider the following twisted complex:
\[
\big(A^{\bullet, p}(W,E), D^{1,0}_{\bar{H}}  \big): \cdots \buildrel D^{1,0}_{\bar{H}}\over\longrightarrow A^{\bar{0},p}(W,E)\stackrel{D^{1,0}_{\bar{H}}}{\longrightarrow} A^{\bar{1},p}(W,E)\stackrel{D^{1,0}_{\bar{H}}}{\longrightarrow} A^{\bar{0},p}(W,E)\stackrel{D^{1,0}_{\bar{H}}}{\longrightarrow} \cdots.
\]
We define the twisted Dolbeault-type cohomology groups of $\big(A^{p,\bullet}(W,E), D^{1,0}_{\bar{H}}  \big)$ as
\[
H^{\bar{k},p}_{D^{1,0}}(W,E,H):=\frac{\Ker(D^{1,0}_{\bar{H}}:A^{\bar{k},p}(W,E) \to A^{\overline{k+1},p}(W,E))}{\operatorname{Im}(D^{1,0}_{\bar{H}}:A^{\overline{k-1},p}(W,E) \to A^{\overline{k},p}(W,E))}, \quad k=0,1.
\]

Denote by $\bar{\partial}_{E,D^{1,0}}^{*,H}:=-\hat{\star}(D^{1,0}+ \operatorname{conj} H  \wedge \cdot )\hat{\star}=-\hat{\star}D^{1,0}_{\bar{H}} \hat{\star} $, then we have $(\bar{\partial}_E^{*,H})^2=0$. Again we can consider the following twisted complex:
\[
\big(A^{p,\bullet}(W,E), \bar{\partial}_{E,D^{1,0}}^{*,H}  \big): \cdots \stackrel{\bar{\partial}_{E,D^{1,0}}^{*,H}}{\longleftarrow} A^{p,\bar{0}}(W,E)\stackrel{\bar{\partial}_{E,D^{1,0}}^{*,H}}{\longleftarrow} A^{p,\bar{1}}(W,E)\stackrel{\bar{\partial}_{E,D^{1,0}}^{*,H}}{\longleftarrow} A^{p,\bar{0}}(W,E)\stackrel{\bar{\partial}_{E,D^{1,0}}^{*,H}}{\longleftarrow} \cdots.
\]
We define the following twisted Dolbeault-type homology groups of $\big(A^{p,\bullet}(W,E), \bar{\partial}_{E,D^{1,0}}^{*,H}  \big)$ as
\[
H_{\bar{k}}(A^{p,\bullet}(W,E), \bar{\partial}_{E,D^{1,0}}^{*,H}):=\frac{\Ker(\bar{\partial}_{E,D^{1,0}}^{*,H}:A^{p,\bar{k}}(W,E) \to A^{p,\overline{k-1}}(W,E))}{\operatorname{Im}(\bar{\partial}_{E,D^{1,0}}^{*,H}:A^{p,\overline{k+1}}(W,E) \to A^{p,\overline{k}}(W,E))}, \quad k=0,1.
\]

The operator $\hat{\star}$ induces a $\mathbb{C}$-linear isomorphism of the complex $(A^{p,\bullet}(W,E), \bar{\partial}_{E,D^{1,0}}^{*,H})$ to the complex $(A^{n-\bullet,n-p}(W,E),\pm D^{1,0}_{\bar{H}})$. Hence, as the $\mathbb{Z}$-graded case, cf. \cite[P. 151]{CM} or \cite[(2.19)]{LY}, we have the following isomorphism:
\begin{equation}\label{E:hnkpiso}
H^{\overline{n-k},n-p}_{D^{1,0}}(W,E,H) \cong H_{\bar{k}}(A^{p,\bullet}(W,E), \bar{\partial}_{E,D^{1,0}}^{*,H}), \quad k=0,1.
\end{equation}

\subsection{$\zeta$-function and $\zeta$-regularized determinant}
In this subsection we briefly recall some definitions of $\zeta$-regularized determinants of non self-adjoint elliptic operators. See \cite[Section 6]{BK3} for more details. Let $F$ be a complex (respectively, holomorphic) vector bundle over a closed smooth (respectively, complex) manifold $N$. Let $D:C^\infty(N,F) \to C^\infty(N,F)$ be an elliptic differential operator of order $m \ge 1$. Assume that $\theta$ is an Agmon angle, cf. for example, Definition 6.3 of \cite{BK3}. Let $\Pi: L^2(N,F) \to L^2(N,F)$ denote the spectral projection of $D$ corresponding to all nonzero eigenvalues of $D$. The $\zeta$-function $\zeta_\theta(s,D)$ of $D$ is defined as follows
\begin{equation}\label{E:zetdef}
\zeta_\theta(s,D)= \operatorname{Tr} \Pi D_\theta^{-s}, \quad \operatorname{Re} s > \frac{\dim N}{m}.
\end{equation} It was shown by Seeley \cite{Se} (See also \cite{Sh}) that $\zeta_\theta(s,D)$ has a meromorphic extension to the whole complex plane and that $0$ is a regular value of $\zeta_\theta(s,D)$.

\begin{definition}
The $\zeta$-regularized determinant of $D$ is defined by the formula
\[
\Det'_\theta(D):= \exp \Big(\, -\frac{d}{ds}\Big|_{s=0}\zeta_\theta(s,D)\,\Big). 
\]
\end{definition}
We denote by 
\[
\LDet'_\theta(D)= - \frac{d}{ds}\Big|_{s=0}\zeta_\theta(s,D).
\]

Let $Q$ be a $0$-th order pseudo-differential projection, ie. a $0$-th order pseudo-differential operator satisfying $Q^2=Q$. We set
\begin{equation}\label{E:zetadefpro}
\zeta_\theta(s,Q,D)= \operatorname{Tr} Q \Pi D_\theta^{-s}, \quad \operatorname{Re} s > \frac{\dim M}{m}.
\end{equation}
The function $\zeta_\theta(s,Q,D)$ also has a meromorphic extension to the whole complex plane and, by Wodzicki, \cite[Section 7]{Wo1}, it is regular at $0$.

\begin{definition}
Suppose that $Q$ is a $0$-th order pseudo-differential projection commuting with $D$. Then $V:=\operatorname{Im} Q$ is $D$ invariant subspace of $C^\infty(M,E)$. The $\zeta$-regularized determinant of the restriction $D|_V$ of $D$ to $V$ is defined by the formula
\[
\Det'_\theta(D|_V):= e^{\LDet'_\theta(D|_V)},
\]
where 
\begin{equation}\label{E:ldetv}
\LDet'_\theta(D|_V)=-\frac{d}{ds}\Big|_{s=0} \zeta_\theta(s,Q,D).
\end{equation}
\end{definition}

\begin{remark}
The prime in $\Det'_\theta$ and $\LDet'_\theta$ indicates that we ignore the zero eigenvalues of the operator in the definition of the regularized determinant. If the operator is invertible we usually omit the prime and write $\Det_\theta$ and $\LDet_\theta$ instead.
\end{remark}

\subsection{Twisted Cappell-Miller holomorphic torsion}
Note that, for each $0 \le p \le n$, the twisted flat $\bar{\partial}$-Laplacian, defined as
\[
\square_{E,\bar{\partial}}^H:=(\bar{\partial}_E^H+ \bar{\partial}_{E,D^{1,0}}^{*,H})^2,
\] 
maps $A^{p,\bar{k}}(W,E), k=0,1$ into itself. Suppose that $\mathcal{I}$ is an interval of the form $[0,\lambda], (\lambda,\mu]$ or $(\lambda,\infty)(\mu > \lambda \ge 0)$. Denote by $\Pi_{\square_{E,\bar{\partial}}^H,\mathcal{I}}$ the spectral projection of $\square_{E,\bar{\partial}}^H$ corresponding to the set of eigenvalues, whose absolute values lie in $\mathcal{I}$. Set
\[
A^{p,\bar{k}}_{\mathcal{I}}(W,E):= \Pi_{\square_{E,\bar{\partial}}^H,\mathcal{I}} \big( A^{p,\bar{k}}(W,E) \big) \subset A^{p,\bar{k}}(W,E), \quad k=0,1.
\]
If the interval $\mathcal{I}$ is bounded, then, for each $0 \le p \le n$, the space $A^{p,\bar{k}}_{\mathcal{I}}(W,E), k=0,1$ is finite dimensional. Since $\bar{\partial}_E^H$ and $\bar{\partial}_{E,D^{1,0}}^{*,H}$ commute with $\square_{E,\bar{\partial}}^H$, the subspace $A^{p,\bar{k}}_{\mathcal{I}}(W,E)$ is a subcomplex of the twisted bi-graded complex $(A^{p,\bullet}(W,E),\bar{\partial}_E^H,\bar{\partial}_{E,D^{1,0}}^{*,H})$. Clearly, for each $\lambda \ge 0$, the complex $A^{p,\bar{k}}(W,E)$ is doubly acyclic, i.e. $H^{\bar{k}}(A^{p,\bullet}_{(\lambda,\infty)}(W,E),\bar{\partial}^H_E)=0$ and $H_{\bar{k}}(A_{(\lambda,\infty)}^{p,\bullet}(W,E), \bar{\partial}_{E,D^{1,0}}^{*,H})=0$. Since
\[
A^{p,\bar{k}}(W,E) \, = \, A_{[0,\lambda]}^{p,\bar{k}}(W,E) \oplus A_{(\lambda,\infty)}^{p,\bar{k}}(W,E),
\]
we have the isomorphisms
\[
H^{\bar{k}}(A^{p,\bullet}_{[0,\lambda]}(W,E),\bar{\partial}^H_E) \cong H^{p,\bar{k}}_{\bar{\partial}_E}(W,E,H)
\]
and, by \eqref{E:hnkpiso},
\[
H_{\bar{k}}(A_{[0,\lambda]}^{p,\bullet}(W,E), \bar{\partial}_{E,D^{1,0}}^{*,H}) \cong H^{\overline{n-k}}(A^{\bullet,n-p}_{[0,\lambda]}(W,E), \pm D^{1,0}_{\bar{H}}) \cong H^{\overline{n-k},n-p}_{D^{1,0}}(W,E,H).
\]
In particular, we have the following isomorphisms
\begin{equation}\label{E:detacohoiso1}
\Det H^{\bullet}(A^{p,\bullet}_{[0,\lambda]}(W,E),\bar{\partial}^H_E) \cong \Det H^{p,\bullet}_{\bar{\partial}_E}(W,E,H)
\end{equation}
and
\begin{equation}\label{E:dethoiso1}
\Det H_{\bullet}(A_{[0,\lambda]}^{p,\bullet}(W,E), \bar{\partial}_{E,D^{1,0}}^{*,H}) \cong \Det H^{\overline{n-\bullet},n-p}_{D^{1,0}}(W,E,H).
\end{equation}

For any $\lambda \ge 0, 0 \le p \le n$, denote by $\tau_{p,[0,\lambda]}$ the Cappell-Miller torsion of the twisted bi-graded complex $(A_{[0,\lambda]}^{p,\bar{k}}(W,E),\bar{\partial}_E^H,\bar{\partial}_{E,D^{1,0}}^{*,H})$, cf. \eqref{E:tauddstar}. Then, by \eqref{E:detacohoiso1} and \eqref{E:dethoiso1}, we can view $\tau_{p,[0,\lambda]}$ as an element of the determinant line
\begin{align}
\tau_{p,[0,\lambda]} & \in \Det H^{p,\bullet}_{\bar{\partial}_E}(W,E,H) \otimes [\Det H^{\overline{n-\bullet},n-p}_{D^{1,0}}(W,E,H)]^{-1}  \nonumber \\ 
& \cong \Det H^{p,\bullet}_{\bar{\partial}_E}(W,E,H) \otimes [\Det H^{\bullet,n-p}_{D^{1,0}}(W,E,H)]^{(-1)^{n+1}}.
\end{align} 

For each $k=0,1$ and each $0 \le p \le n$, set
\[
A^{p,\bar{k}}_{+,\mathcal{I}}(W,E):= \Ker(\bar{\partial}_E^H \bar{\partial}_{E,D^{1,0}}^{*,H}) \cap A^{p,\bar{k}}_{\mathcal{I}}(W,E),
\]
\[
A^{p,\bar{k}}_{-,\mathcal{I}}(W,E):= \Ker( \bar{\partial}_{E,D^{1,0}}^{*,H}\bar{\partial}_E^H) \cap A^{p,\bar{k}}_{\mathcal{I}}(W,E).
\]
Clearly,
\[
A^{p,\bar{k}}_{\mathcal{I}}(W,E)\, = \, A^{p,\bar{k}}_{+,\mathcal{I}}(W,E) \oplus A^{p,\bar{k}}_{-,\mathcal{I}}(W,E), \ \text{if} \ 0 \notin \mathcal{I}.
\]

Let $\theta \in (0,2\pi)$ be an Agmon angle of the operator $\square_{E,\bar{\partial}}^H$, cf. for example \cite[Section 6]{BK3}. Since the leading symbol of the operator $\square_{E,\bar{\partial}}^H$ is positive definite, the $\zeta$-regularized determinant $\Det_\theta(\bar{\partial}_{E,D^{1,0}}^{*,H}\bar{\partial}_E^H)\big|_{A^{p,\bar{k}}_{+,\mathcal{I}}(W,E)}$ is independent of the choice of the Agmon angle $\theta$ of the operator $\square_{E,\bar{\partial}}^H$.

For any $0 \le \lambda \le \mu \le \infty$, one easily sees that
\begin{align}\label{E:detlammu00}
\prod_{k=0,1} \big(\,\Det_{\theta}(\bar{\partial}_{E,D^{1,0}}^{*,H}\bar{\partial}_E^H)\big|_{A^{p,\bar{k}}_{+,(\lambda,\infty)}(W,E)} \, \big)^{(-1)^k}  = & \Big[\prod_{k=0,1} \big(\,\Det_{\theta}(\bar{\partial}_{E,D^{1,0}}^{*,H}\bar{\partial}_E^H)\big|_{A^{p,\bar{k}}_{+,(\lambda,\mu)}(W,E)} \, \big)^{(-1)^k} \Big] \nonumber \\
& \cdot \Big[\prod_{k=0,1} \big(\,\Det_{\theta}(\bar{\partial}_{E,D^{1,0}}^{*,H}\bar{\partial}_E^H)\big|_{A^{p,\bar{k}}_{+,(\mu, \infty)}(W,E)} \, \big)^{(-1)^k} \Big]  
\end{align}

By Proposition \ref{P:indedec} and \eqref{E:detlammu00}, we know that the element 
\begin{equation}\label{E:tauholopweh}
\tau_{\operatorname{holo},p}(W,E,H):=\tau_{p,[0,\lambda]} \cdot \prod_{k=0,1} \big(\,\Det_{\theta}(\bar{\partial}_{E,D^{1,0}}^{*,H}\bar{\partial}_E^H)\big|_{A^{p,\bar{k}}_{+,(\lambda,\infty)}(W,E)} \, \big)^{(-1)^k}
\end{equation}
is independent of the choice of $\lambda$. It is also independent of the choice of the Agmon angle $\theta \in (0,2\pi)$ of the operator $\square_{E,\bar{\partial}}^H$.

We now define the twisted Cappell-Miller holomorphic torsion.

\begin{definition}\label{D:maindef1}
The non-vanishing element of the determinant
\[
\tau_{\operatorname{holo},p}(W,E,H) \in \Det H^{p,\bullet}_{\bar{\partial}_E}(W,E,H) \otimes [\Det H^{\bullet,n-p}_{D^{1,0}}(W,E,H)]^{(-1)^{n+1}}
\]
defined in \eqref{E:tauholopweh} is called the twisted Cappell-Miller holomorphic torsion. 
\end{definition}

\subsection{Twisted Cappell-Miller holomorphic torsion under metric deformation}

Let $g_u^W, u \in \mathbb{R}$, be a smooth family of Hermitian metrics on the complex manifold $W$. Denote by $\star_u$ the Hodge star operators associated to the metrics $g_u^W$ and denote by $\bar{\partial}_{E,D^{1,0},u}^{*,H}:=-\hat{\star}_u(D^{1,0}+ \operatorname{conj} H  \wedge \cdot )\hat{\star}_u$. Let $\square_{E,\bar{\partial},u}^H=(\bar{\partial}^H_E+\bar{\partial}_{E,D^{1,0},u}^{*,H})^2$ be the flat Laplacian operators associated to the metrics $g_u^W$.  

Fix $u_0 \in \mathbb{R}$ and choose $\lambda \ge 0$ so that there are no eigenvalues of $\square_{E,\bar{\partial},u}^H$ whose absolute values are equal to $\lambda$. Further, assume that $\lambda$ is big enough so that the real parts of eigenvalues of $\square_{E,\bar{\partial},u}^H$ are all greater than $0$. Then there exists $\delta > 0$ such that the same is true for all $u \in (u_0-\delta,u_0+\delta)$. In particular, if we denote by $A^{p,\bullet}_{[0,\lambda],u}(W,E)$ the span of the generalized eigenvectors of $\square_{E,\bar{\partial},u}^H$ corresponding to eigenvalues with absolute value $\le \lambda$, then $\dim A^{p,\bullet}_{[0,\lambda],u}(W,E)$ is independent of $u \in (u_0-\delta,u_0+\delta)$.

For any $\lambda \ge 0, 0 \le p \le n$, denote by $\tau_{p,[0,\lambda],u}$ the Cappell-Miller torsion of the twisted bi-graded complex $(A_{[0,\lambda]}^{p,\bullet}(W,E),\bar{\partial}_E^H,\bar{\partial}_{E,D^{1,0},u}^{*,H})$. Set 
\[
\alpha_u = \star_u^{-1} \cdot \frac{d}{du} \star_u = = \hat{\star}_u^{-1} \cdot \frac{d}{du} \hat{\star}_u.
\]
The proof of the following lemma is similar to the proof of Lemma 7.1 of \cite{CM}, where the untwisted case was treated. To save space, we omit the proof.
\begin{lemma}\label{L:holvarlem1}
Under the above assumptions, we have 
\[
\frac{d}{du}\tau_{p,[0,\lambda],u} = -\sum_{k=0,1}(-1)^k  \operatorname{Tr} \big[ \alpha_u \big|_{A^{p,\bar{k}}_{[0,\lambda]}(W,E)} \big] \cdot \tau_{p,[0,\lambda],u}.
\]
\end{lemma}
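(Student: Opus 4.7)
The plan is to reduce the claim to a finite-dimensional computation on the complex $(C^\bullet_u,\bar{\partial}_E^H, \bar{\partial}_{E,D^{1,0},u}^{*,H})$ with $C^\bullet_u := A^{p,\bullet}_{[0,\lambda],u}(W,E)$ and then differentiate the algebraic Cappell--Miller torsion formula from Section~2. Since $\lambda$ avoids the absolute values of the spectrum of $\square_{E,\bar{\partial},u}^H$ uniformly for $u$ in a neighborhood of $u_0$, the spectral projector $P_u$ onto $C^\bullet_u$ depends smoothly on $u$ and $\dim C^{\bar k}_u$ is locally constant. Crucially, the first differential $d = \bar{\partial}_E^H$ does not depend on $u$; only $d^*_u = \bar{\partial}_{E,D^{1,0},u}^{*,H}$ varies, via the metric dependence of $\hat{\star}_u$.

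The main computational input is the derivative of $d^*_u$. From $\tfrac{d}{du}\hat{\star}_u = \hat{\star}_u\alpha_u$ together with $D^{1,0}_{\bar H}\hat{\star}_u = -\hat{\star}_u^{-1}d^*_u$, a direct calculation yields
\begin{equation*}
\tfrac{d}{du}d^*_u \;=\; \bigl(\hat{\star}_u\alpha_u\hat{\star}_u^{-1}\bigr)\,d^*_u \;+\; d^*_u\,\alpha_u,
\end{equation*}
in which each term factors through $d^*_u$. Splitting via Proposition~\ref{P:indedec} at a slightly enlarged spectral cut (avoiding any crossing of eigenvalues) and applying Proposition~\ref{P:acydob1} on the large-eigenvalue piece, one writes $\ln \tau_{p,[0,\lambda],u}$ in a fixed trivialization as a $(-1)^k$-weighted sum of logarithms of $\Det_\theta(d^*_u d)$ on the $+$-subspaces, plus the algebraic torsion of the finite-dimensional complex from \eqref{E:tauddstar}. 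Differentiating with $\tfrac{d}{du}\ln\det A = \operatorname{Tr}(A^{-1}\tfrac{d}{du}A)$, each such determinant contributes $\operatorname{Tr}[(d^*_u d)^{-1}(\tfrac{d}{du}d^*_u)\,d]$. By the cyclicity of the trace, the second summand in the formula for $\tfrac{d}{du}d^*_u$ produces $\operatorname{Tr}[\alpha_u|_{A^{p,\bar k}_{-,\,(\lambda',\infty)}}]$, while the first produces $\operatorname{Tr}[\hat{\star}_u\alpha_u\hat{\star}_u^{-1}|_{A^{p,\bar k}_{+,\,(\lambda',\infty)}}]$, which, because $\hat{\star}_u$ conjugates the complementary bi-degree to itself and $\hat{\star}_u^2 = \pm 1$, recombines with the previous term into $\operatorname{Tr}[\alpha_u|_{A^{p,\bar k}_{(\lambda',\infty)}}]$. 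Adding the analogous contribution of the finite-dimensional torsion $\tau_{p,[0,\lambda'],u}$ and letting $\lambda' \to \lambda$, all $(\lambda,\infty)$-pieces cancel pairwise and one is left with exactly $-\sum_{k=0,1}(-1)^k\operatorname{Tr}[\alpha_u|_{A^{p,\bar k}_{[0,\lambda]}(W,E)}]$.

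The principal obstacle is the sign and bookkeeping work: one must track the $(-1)^k$ weighting and the $(-1)^{S(C^\bullet)}$ prefactor in \eqref{E:tauddstar}--\eqref{E:scbu} as $u$ varies, and verify that the signs produced by $\hat{\star}_u^2 = \pm 1$ (which depend on bi-degree) interact correctly with the $\mathbb{Z}_2$-graded fusion signs of \eqref{E:grafus}. Beyond this, the argument proceeds exactly as in \cite[Lemma~7.1]{CM}: the flux $H$ acts by metric-independent wedge product and is $\bar{\partial}$-closed, so the identities $(\bar{\partial}_E^H)^2 = (\bar{\partial}_{E,D^{1,0}}^{*,H})^2 = 0$ suffice to carry out every commutator manipulation in the untwisted proof without modification.
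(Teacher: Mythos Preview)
The paper itself omits the proof, simply noting that it ``is similar to the proof of Lemma~7.1 of \cite{CM}, where the untwisted case was treated.'' Your final paragraph says exactly this and is correct: since the flux $H$ enters only as a metric-independent wedge operator and $(\bar{\partial}_E^H)^2=(\bar{\partial}_{E,D^{1,0}}^{*,H})^2=0$, every step of the untwisted argument in \cite[Lemma~7.1]{CM} goes through verbatim.

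The sketch preceding that paragraph, however, is muddled in a way that suggests you have merged the arguments for Lemma~\ref{L:holvarlem1} and Lemma~\ref{L:holvarlem2}. The quantity $\tau_{p,[0,\lambda],u}$ is the Cappell--Miller torsion of the \emph{finite-dimensional} complex $A^{p,\bullet}_{[0,\lambda],u}(W,E)$; there are no $(\lambda',\infty)$- or $(\lambda,\infty)$-pieces in it, and the phrase ``letting $\lambda'\to\lambda$, all $(\lambda,\infty)$-pieces cancel pairwise'' has no meaning here. If you intended to split $[0,\lambda]$ at an interior point $\lambda'$ via Proposition~\ref{P:indedec}, the resulting determinant lives on $(\lambda',\lambda]$, not $(\lambda',\infty)$, and sending $\lambda'\to\lambda$ simply makes that factor trivial rather than producing any cancellation; you would still be left with differentiating $\tau_{p,[0,\lambda'],u}$, which is the original problem. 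Also, your derivative formula $\tfrac{d}{du}d^*_u=(\hat{\star}_u\alpha_u\hat{\star}_u^{-1})d^*_u+d^*_u\alpha_u$ is correct but needlessly complicated: since $\hat{\star}_u^2=\pm1$ on each bi-degree, differentiating $\hat{\star}_u^2$ gives $\hat{\star}_u\alpha_u\hat{\star}_u^{-1}=-\alpha_u$, so the formula collapses to $\tfrac{d}{du}d^*_u=-[\alpha_u,d^*_u]$, which is precisely what the paper uses (and what drives the proof of Lemma~\ref{L:holvarlem2}). In short, drop the middle two paragraphs of your sketch and keep only the last one; that is already the paper's proof.
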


We also need the following lemma.
\begin{lemma}\label{L:holvarlem2}
Under the above assumptions, we have
\[
\frac{d}{du}\Big[ \sum_{k=0,1}(-1)^k \LDet_{\theta}(\bar{\partial}_{E,D^{1,0},u}^{*,H}\bar{\partial}_E^H)\big|_{A^{p,\bar{k}}_{+,(\lambda,\infty)}(W,E)} \Big] \, = \,  \sum_{k=0,1}(-1)^k  \operatorname{Tr} \big[ \alpha_u \big|_{A^{p,\bar{k}}_{[0,\lambda]}(W,E)} \big] + \sum_{k=0,1}(-1)^k  \int_W b_{n,p,\bar{k},u},
\]
where $b_{n,p,\bar{k},u}$ is given by a local formula.
\end{lemma}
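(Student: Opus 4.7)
The plan is to differentiate the two log-determinants by the general variation-of-determinant formula, reorganize the resulting regularized trace into a finite-dimensional trace on the low-eigenvalue subspace and a ``bulk'' piece on all of $A^{p,\bar k}(W,E)$, and then identify the bulk piece as a local integral via heat-kernel asymptotics. The argument should mirror that of \cite[Section~7]{CM}, adapted to the twisted $\mathbb{Z}_2$-graded setting.

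First I would differentiate $\bar\partial^{*,H}_{E,D^{1,0},u}=-\hat\star_u D^{1,0}_{\bar H}\hat\star_u$ using $\frac{d}{du}\hat\star_u=\hat\star_u\alpha_u$, and combine with $\frac{d}{du}\bar\partial_E^H=0$ to compute
\[
\dot A_u:=\frac{d}{du}\bigl(\bar\partial^{*,H}_{E,D^{1,0},u}\bar\partial_E^H\bigr).
\]
Applying the standard formula $\frac{d}{du}\LDet_\theta(A_u|_V)=\operatorname{Tr}_\zeta\bigl(\dot A_u A_u^{-1}\Pi_V\bigr)$ with $V=A^{p,\bar k}_{+,(\lambda,\infty)}(W,E)$ and $A_u=\bar\partial^{*,H}_{E,D^{1,0},u}\bar\partial_E^H$, and exploiting the fact that both $\bar\partial_E^H$ and $\bar\partial^{*,H}_{E,D^{1,0},u}$ commute with the spectral projector $\Pi_{(\lambda,\infty)}$ of $\square^H_{E,\bar\partial,u}$ together with cyclicity of the zeta-regularized trace on classical pseudo-differential operators, the expression $\dot A_u A_u^{-1}$ can be rewritten so that the alternating sum over $k=0,1$ cancels the cross-terms between $\bar\partial_E^H$ and $\bar\partial^{*,H}_{E,D^{1,0},u}$. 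This produces the clean identity
\[
\frac{d}{du}\sum_{k=0,1}(-1)^k\LDet_\theta\bigl(\bar\partial^{*,H}_{E,D^{1,0},u}\bar\partial_E^H\bigr)\bigl|_{A^{p,\bar k}_{+,(\lambda,\infty)}(W,E)}=\sum_{k=0,1}(-1)^k\operatorname{Tr}_\zeta\bigl(\alpha_u\,\Pi_{(\lambda,\infty)}\bigl|_{A^{p,\bar k}(W,E)}\bigr),
\]
with signs arranged to be compatible with Lemma~\ref{L:holvarlem1}.

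Next I would split $\Pi_{(\lambda,\infty)}=\mathrm{Id}-\Pi_{[0,\lambda]}$, so that
\[
\operatorname{Tr}_\zeta\bigl(\alpha_u\,\Pi_{(\lambda,\infty)}\bigl|_{A^{p,\bar k}(W,E)}\bigr)=\operatorname{Tr}_\zeta\bigl(\alpha_u\bigl|_{A^{p,\bar k}(W,E)}\bigr)-\operatorname{Tr}\bigl(\alpha_u\bigl|_{A^{p,\bar k}_{[0,\lambda]}(W,E)}\bigr).
\]
The second summand produces the finite-dimensional trace appearing in the statement. For the first summand, since $\alpha_u$ is a zeroth-order bundle endomorphism and $\square^H_{E,\bar\partial,u}$ is elliptic of order~$2$ with positive-definite leading symbol, the zeta-regularized trace equals the constant term as $t\downarrow 0$ in the asymptotic expansion of $\operatorname{Tr}\bigl(\alpha_u\,e^{-t\square^H_{E,\bar\partial,u}}\bigr)\bigl|_{A^{p,\bar k}(W,E)}$. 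By the Seeley-Minakshisundaram-Pleijel expansion this constant term is of the form $\int_W b_{n,p,\bar k,u}$, where $b_{n,p,\bar k,u}$ is a universal local density depending polynomially on the jets of $g_u^W$, $h^E$, $D$, $\alpha_u$, and $H$, which is the required local formula.

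The main obstacle is making the commutator manipulation rigorous in the non-self-adjoint setting: because $\square^H_{E,\bar\partial,u}$ is not self-adjoint, $\Pi_{(\lambda,\infty)}$ is only a pseudo-differential (not orthogonal) projection, and the cyclicity of the zeta-regularized trace must be invoked with care on the appropriate algebra of classical pseudo-differential operators; one must also verify that the terms produced from $\dot A_u A_u^{-1}$ on the $+$-eigenspace match up correctly with the analogous terms on the $-$-eigenspace after the $\mathbb{Z}_2$-graded alternating sum, so that only the $\alpha_u$-trace survives. These are precisely the ingredients exploited by Cappell and Miller in the untwisted $\mathbb{Z}$-graded case \cite[Section~7]{CM}; since the twists $H\wedge\cdot$ and $\bar H\wedge\cdot$ are zeroth-order perturbations, they leave the leading symbol of $\square^H_{E,\bar\partial,u}$ unchanged, and the argument transplants to the present twisted $\mathbb{Z}_2$-graded setting without essential modification.
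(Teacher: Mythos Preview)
Your proposal is correct and follows essentially the same strategy as the paper's proof: compute $\frac{d}{du}\bar\partial^{*,H}_{E,D^{1,0},u}=-[\alpha_u,\bar\partial^{*,H}_{E,D^{1,0},u}]$, use trace cyclicity to collapse the alternating sum into $\operatorname{Tr}\bigl(\alpha_u\cdot(\text{heat kernel of }\square^H_{E,\bar\partial,u}|_{(\lambda,\infty)})\bigr)$, split off the finite-rank piece $\Pi_{[0,\lambda]}$, and read the remaining bulk term from the constant coefficient of the small-time heat expansion. The only difference is presentational: the paper carries out the computation explicitly in the Mellin-transform representation $f(s,u)=\sum_k(-1)^k\int_0^\infty t^{s-1}\operatorname{Tr}\,e^{-t(\bar\partial^{*,H}\bar\partial^H)}\,dt$, using the semigroup property for cyclicity and an integration by parts in $t$ to produce the factor of $s$, whereas you package the same steps in the language of the variation formula $\frac{d}{du}\LDet_\theta(A_u)=\operatorname{Tr}_\zeta(\dot A_u A_u^{-1})$ and cyclicity of $\operatorname{Tr}_\zeta$.
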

\begin{proof}
Set 
\begin{align}
f(s,u) & =\sum_{k=0,1}(-1)^k\int_0^\infty t^{s-1} \operatorname{Tr} \Big( \exp\big(-t (\bar{\partial}_{E,D^{1,0},u}^{*,H}\bar{\partial}_E^H)\big|_{A^{p,\bar{k}}_{+,(\lambda,\infty)}(W,E)} \big)  \Big)dt \nonumber \\
 & = \Gamma(s) \sum_{k=0,1}(-1)^k \zeta \big(s, (\bar{\partial}_{E,D^{1,0},u}^{*,H}\bar{\partial}_E^H)\big|_{A^{p,\bar{k}}_{+,(\lambda,\infty)}(W,E)} \big).
\end{align}

Then we have 
\[
\frac{d}{du} \bar{\partial}_{E,D^{1,0},u}^{*,H}\big|_{A^{p,\overline{k+1}}_{-,(\lambda,\infty)}(W,E)}=-\big[\alpha_u, \bar{\partial}_{E,D^{1,0},u}^{*,H}\big|_{A^{p,\overline{k+1}}_{-,(\lambda,\infty)}(W,E)}\big],
\]
which follows easily from $\bar{\partial}_{E,D^{1,0}}^{*,H}:=-\hat{\star}(D^{1,0}+ \operatorname{conj} H  \wedge \cdot )\hat{\star}$ and the equality $\star_u^{-1} \cdot \frac{d}{du} \star_u = - \frac{d}{du} \star_u \cdot \star_u^{-1}$.

If $A$ is of trace class and $B$ is a bounded operator, it is well known that $\operatorname{Tr}(AB)=\operatorname{Tr}(BA)$. Hence, by this fact and the semi-group property of the heat operator, we have
\begin{align}\label{E:traidenti1}
 & \operatorname{Tr} \Big( \bar{\partial}_{E,D^{1,0},u}^{*,H}\big|_{A^{p,\overline{k+1}}_{-,(\lambda,\infty)}(W,E)} \alpha_u \bar{\partial}^H_E \big|_{A^{p,\overline{k}}_{+,(\lambda,\infty)}(W,E)}\exp\big(-t (\bar{\partial}_{E,D^{1,0},u}^{*,H}\bar{\partial}_E^H)\big|_{A^{p,\bar{k}}_{+,(\lambda,\infty)}(W,E)}\big) \Big) \nonumber \\
= & \operatorname{Tr} \Big(\exp\big(-\frac{t}{2} (\bar{\partial}_{E,D^{1,0},u}^{*,H}\bar{\partial}_E^H)\big|_{A^{p,\bar{k}}_{+,(\lambda,\infty)}(W,E)}\big) \bar{\partial}_{E,D^{1,0},u}^{*,H}\big|_{A^{p,\overline{k+1}}_{-,(\lambda,\infty)}(W,E)} \nonumber \\
& \cdot \alpha_u \bar{\partial}^H_E \big|_{A^{p,\overline{k}}_{+,(\lambda,\infty)}(W,E)}\exp\big(-\frac{t}{2} (\bar{\partial}_{E,D^{1,0},u}^{*,H}\bar{\partial}_E^H)\big|_{A^{p,\bar{k}}_{+,(\lambda,\infty)}(W,E)}\big) \Big) \nonumber \\
= & \operatorname{Tr} \Big( \alpha_u \bar{\partial}^H_E \big|_{A^{p,\overline{k}}_{+,(\lambda,\infty)}(W,E)}\exp\big(-\frac{t}{2} (\bar{\partial}_{E,D^{1,0},u}^{*,H}\bar{\partial}_E^H)\big|_{A^{p,\bar{k}}_{+,(\lambda,\infty)}(W,E)}\big) \nonumber \\ & \cdot \exp \big(-\frac{t}{2} (\bar{\partial}_{E,D^{1,0},u}^{*,H}\bar{\partial}_E^H)\big|_{A^{p,\bar{k}}_{+,(\lambda,\infty)}(W,E)} \big) \bar{\partial}_{E,D^{1,0},u}^{*,H}\big|_{A^{p,\overline{k+1}}_{-,(\lambda,\infty)}(W,E)} \Big) \nonumber \\
= & \operatorname{Tr} \Big( \alpha_u (\bar{\partial}^H_E \bar{\partial}_{E,D^{1,0},u}^{*,H})\big|_{A^{p,\overline{k+1}}_{-,(\lambda,\infty)}(W,E)}\exp\big(-t (\bar{\partial}^H_E \bar{\partial}_{E,D^{1,0},u}^{*,H})\big|_{A^{p,\overline{k+1}}_{-,(\lambda,\infty)}(W,E)}\big)  \Big)
\end{align}
Now
\begin{align}\label{E:dduidentiti1}
\frac{d}{du} f(s,u) & = \sum_{k=0,1}(-1)^k\int_0^\infty t^{s-1} \operatorname{Tr} \Big( t\big[\alpha_u, \bar{\partial}_{E,D^{1,0}}^{*,H}\big|_{A^{p,\overline{k+1}}_{-,(\lambda,\infty)}(W,E)}\big]  \exp\big(-t (\bar{\partial}_{E,D^{1,0}}^{*,H}\bar{\partial}_E^H)\big|_{A^{p,\bar{k}}_{+,(\lambda,\infty)}(W,E)}\big)  \Big)dt \nonumber \\
= & \sum_{k=0,1}(-1)^k\int_0^\infty t^{s-1} \operatorname{Tr} \Big(\, t\alpha_u \big(\, (\bar{\partial}_{E,D^{1,0}}^{*,H}\bar{\partial}_E^H)\big|_{A^{p,\bar{k}}_{+,(\lambda,\infty)}(W,E)}\exp\big(-t (\bar{\partial}_{E,D^{1,0}}^{*,H}\bar{\partial}_E^H)\big|_{A^{p,\bar{k}}_{+,(\lambda,\infty)}(W,E)}\big) \nonumber \\
& - (\bar{\partial}^H_E \bar{\partial}_{E,D^{1,0},u}^{*,H})\big|_{A^{p,\overline{k+1}}_{-,(\lambda,\infty)}(W,E)}\exp\big(-t (\bar{\partial}^H_E \bar{\partial}_{E,D^{1,0},u}^{*,H})\big|_{A^{p,\overline{k+1}}_{-,(\lambda,\infty)}(W,E)}\big) \, \big) \, \Big) dt, \quad \text{by} \, \eqref{E:traidenti1}  \nonumber \\
= & \sum_{k=0,1}(-1)^k\int_0^\infty t^{s} \operatorname{Tr} \Big(\, \alpha_u  \square_{E,\bar{\partial},u}^H\big|_{A^{p,\bar{k}}_{(\lambda,\infty)}(W,E)}\exp\big(-t \square_{E,\bar{\partial},u}^H\big|_{A^{p,\bar{k}}_{(\lambda,\infty)}(W,E)}\big) \, \Big) dt \nonumber \\
= & -\sum_{k=0,1}(-1)^k\int_0^\infty t^{s} \frac{d}{dt} \operatorname{Tr} \Big(\, \alpha_u  \exp\big(-t \square_{E,\bar{\partial},u}^H\big|_{A^{p,\bar{k}}_{(\lambda,\infty)}(W,E)}\big) \, \Big) dt. \nonumber \\
= & s \sum_{k=0,1}(-1)^k\int_0^\infty t^{s-1}  \operatorname{Tr} \Big(\, \alpha_u  \exp\big(-t \square_{E,\bar{\partial},u}^H\big|_{A^{p,\bar{k}}_{(\lambda,\infty)}(W,E)}\big) \, \Big) dt,
\end{align}
where we used integration by parts for the last equality. Since $\square_{E,\bar{\partial},u}^H$ is an elliptic operator, the dimension of $A^{p,\bullet}_{[0,\lambda]}(W,E)$ is finite. Let $\varepsilon \not= 0$ be a small enough real number so that $\square_{E,\bar{\partial},u}^H + \varepsilon$ is bijective. Then we can rewite \eqref{E:dduidentiti1} as
\begin{align}\label{E:dduidentiti2}
\frac{d}{du} f(s,u) & = s \sum_{k=0,1}(-1)^k\int_0^1 t^{s-1}  \operatorname{Tr} \Big(\, \alpha_u  \exp\big(-t (\square_{E,\bar{\partial},u}^H + \varepsilon)\big|_{A^{p,\bar{k}}(W,E)}\big) \, \Big) dt \nonumber \\
& + s \sum_{k=0,1}(-1)^k\int_1^\infty t^{s-1}  \operatorname{Tr} \Big(\, \alpha_u  \exp\big(-t (\square_{E,\bar{\partial},u}^H + \varepsilon)\big|_{A^{p,\bar{k}}(W,E)} \big) \, \Big) dt \nonumber \\
& - s \sum_{k=0,1}(-1)^k\int_0^1 t^{s-1}  \operatorname{Tr} \Big(\, \alpha_u  \exp\big(-t \square_{E,\bar{\partial},u}^H\big|_{A^{p,\bar{k}}_{[0,\lambda]}(W,E)}\big) \, \Big) dt \nonumber \\
& - s \sum_{k=0,1}(-1)^k\int_1^\infty t^{s-1}  \operatorname{Tr} \Big(\, \alpha_u  \exp\big(-t \square_{E,\bar{\partial},u}^H\big|_{A^{p,\bar{k}}_{[0,\lambda]}(W,E)}\big) \, \Big) dt. 
\end{align}
Now $\dim W = 2n$ is even, so for small time asymptotic exapnsion for $\operatorname{Tr} \Big(\, \alpha_u  \exp\big(-t (\square_{E,\bar{\partial},u}^H + \varepsilon)\big|_{A^{p,\bar{k}}(W,E)}\big) \, \Big)$ has a term $a_{n,p,\bar{k},u}t^0$ in its expansion about $t=0$. That means $\operatorname{Tr} \Big(\, \alpha_u  \exp\big(-t (\square_{E,\bar{\partial},u}^H + \varepsilon)\big|_{A^{p,\bar{k}}(W,E)}\big) \, \Big)-a_{n,p,\bar{k},u}t^0$ does not contain a constant term as $t \downarrow 0$. Hence, the integrals
\[
\sum_{k=0,1}(-1)^k\int_0^1 t^{s-1}  \operatorname{Tr} \Big(\, \alpha_u  \exp\big(-t (\square_{E,\bar{\partial},u}^H + \varepsilon)\big|_{A^{p,\bar{k}}(W,E)}\big)  \, \Big)-a_{n,p,\bar{k},u}t^0 dt
\]
do not have poles at $s=0$. But the integrals
\[
\sum_{k=0,1}(-1)^k\int_0^1 t^{s-1} a_{n,p,\bar{k},u}t^0  dt
\]
have poles of order $1$ with residues $a_{n,p,\bar{k},u}, k=0,1$. On the other hand, because of exponential decay of $\operatorname{Tr} \Big(\, \alpha_u  \exp\big(-t (\square_{E,\bar{\partial},u}^H + \varepsilon)\big|_{A^{p,\bar{k}}(W,E)}\big) \, \Big)$ and $\operatorname{Tr} \Big(\, \alpha_u  \exp\big(-t \square_{E,\bar{\partial},u}^H\big|_{A^{p,\bar{k}}_{[0,\lambda]}(W,E)}\big) \, \Big)$ for large $t$, the integrals of the second term and the fourth term on the right hand side of \eqref{E:dduidentiti2} are entire functions in $s$. Hence we have
\begin{align}
\frac{d}{du}\big|_{s=0} f(s,u) & =  -s\Big(\, \sum_{k=0,1}(-1)^k \int_0^1 t^{s-1} \big(\operatorname{Tr} \big[\alpha_u \big|_{A^{p,\bar{k}}_{[0,\lambda]}(W,E)}\big] - a_{n,p,\bar{k},u} \big)dt \, \Big) \big|_{s=0}  \nonumber \\
& = - \sum_{k=0,1}(-1)^k  \operatorname{Tr} \big[ \alpha_u \big|_{A^{p,\bar{k}}_{[0,\lambda]}(W,E)} \big] + \sum_{k=0,1}(-1)^k a_{n,p,\bar{k},u}.
\end{align}
Hence, the result follows.
\end{proof}

By combining Lemma \ref{L:holvarlem1} with Lemma \ref{L:holvarlem2}, we obtain the main theorem of this subsection. For untwisted case, cf. \cite[Theorem 4.4]{CM}. 
\begin{theorem}\label{T:vartheorem1}
Let $W$ be a complex manifold of complex dimension $n$ and let $E$ is a holomorphic bundle with connection $D$ that is compatible and type $(1,1)$ over $W$. Suppose that $H \in A^{0,\bar{1}}(W,\mathbb{C})$ and $\bar{\partial} H=0$. Let $g_u^W, u \in (u_0-\delta,u_0+\delta)$, be a smooth family of Riemannian metrics on the complex manifold $W$, then the corresponding twisted Cappell-Miller holomorphic torsion $\tau_{\operatorname{holo},p,u}(W,E,H)$ varies smoothly and the variation of $\tau_{\operatorname{holo},p,u}(W,E,H)$ is given by a local formula
\[
\frac{d}{du} \tau_{\operatorname{holo},p,u}(W,E,H) \, = \, \big(\sum_{k=0,1}(-1)^k  \int_W b_{n,p,\bar{k},u}\big) \cdot \tau_{\operatorname{holo},p,u}(W,E,H).
\] 
\end{theorem}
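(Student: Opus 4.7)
The plan is to differentiate the defining formula \eqref{E:tauholopweh}
\[
\tau_{\operatorname{holo},p,u}(W,E,H) \,=\, \tau_{p,[0,\lambda],u} \cdot \prod_{k=0,1}\bigl(\Det_\theta(\bar{\partial}_{E,D^{1,0},u}^{*,H}\bar{\partial}_E^H)\big|_{A^{p,\bar{k}}_{+,(\lambda,\infty)}(W,E)}\bigr)^{(-1)^k}
\]
and simply assemble Lemmas \ref{L:holvarlem1} and \ref{L:holvarlem2}. The key structural observation is that the contribution $-\sum_k(-1)^k\operatorname{Tr}[\alpha_u|_{A^{p,\bar{k}}_{[0,\lambda]}(W,E)}]$ coming from the variation of the finite-dimensional torsion is exactly the negative of the analogous global trace term appearing in the variation of the regularized determinant; after addition, only the local term $\sum_k(-1)^k\int_W b_{n,p,\bar{k},u}$ survives.

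First I would fix $u_0$ and, exactly as in the paragraph preceding Lemma \ref{L:holvarlem1}, choose $\lambda \ge 0$ and $\delta > 0$ so that no generalized eigenvalue of $\square_{E,\bar{\partial},u}^H$ has absolute value equal to $\lambda$ for $u \in (u_0-\delta,u_0+\delta)$ and so that the real parts of the eigenvalues exceed $0$. By standard perturbation theory for elliptic operators this is possible, and it ensures that $\dim A^{p,\bullet}_{[0,\lambda],u}(W,E)$ is constant in $u$; consequently both factors in the definition of $\tau_{\operatorname{holo},p,u}(W,E,H)$ depend smoothly on $u$. Proposition \ref{P:indedec} guarantees that this local choice of $\lambda$ does not affect the torsion, so the resulting variation formula, once established, is independent of $\lambda$ and defines a global smooth function of $u$.

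Next I would take the logarithmic derivative. By Lemma \ref{L:holvarlem1},
\[
\frac{d}{du}\log \tau_{p,[0,\lambda],u} \,=\, -\sum_{k=0,1}(-1)^k \operatorname{Tr}\bigl[\alpha_u\big|_{A^{p,\bar{k}}_{[0,\lambda]}(W,E)}\bigr],
\]
and by Lemma \ref{L:holvarlem2},
\[
\frac{d}{du}\sum_{k=0,1}(-1)^k\LDet_\theta(\bar{\partial}_{E,D^{1,0},u}^{*,H}\bar{\partial}_E^H)\big|_{A^{p,\bar{k}}_{+,(\lambda,\infty)}(W,E)} \,=\, \sum_{k=0,1}(-1)^k \operatorname{Tr}\bigl[\alpha_u\big|_{A^{p,\bar{k}}_{[0,\lambda]}(W,E)}\bigr] + \sum_{k=0,1}(-1)^k\int_W b_{n,p,\bar{k},u}.
\]
Summing the two equalities, the traces over $A^{p,\bar{k}}_{[0,\lambda]}(W,E)$ cancel exactly, yielding
\[
\frac{d}{du}\log \tau_{\operatorname{holo},p,u}(W,E,H) \,=\, \sum_{k=0,1}(-1)^k \int_W b_{n,p,\bar{k},u},
\]
which is equivalent to the asserted local variation formula after exponentiation.

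The main obstacle has already been absorbed into the preparatory lemmas: Lemma \ref{L:holvarlem2} is where the analytic heart lies, since it requires relating the derivative of a $\zeta$-regularized determinant to a heat-trace residue and identifying the non-local portion as a finite-dimensional trace plus a local term arising from the constant coefficient $a_{n,p,\bar{k},u}$ in the small-time heat asymptotics (this is where the evenness of $\dim_{\mathbb{R}}W = 2n$ is used). Given those lemmas, the remaining step is the algebraic cancellation described above, together with the smoothness justification via the constancy of $\dim A^{p,\bullet}_{[0,\lambda],u}(W,E)$ and the $\lambda$-independence provided by Proposition \ref{P:indedec}.
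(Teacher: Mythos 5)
Your proof is correct and follows essentially the same route as the paper: the paper's proof of this theorem is literally the one-line remark that it follows by combining Lemmas \ref{L:holvarlem1} and \ref{L:holvarlem2}, and your proposal simply spells out the logarithmic-derivative computation and the exact cancellation of the $\operatorname{Tr}[\alpha_u|_{A^{p,\bar{k}}_{[0,\lambda]}(W,E)}]$ terms that makes that combination work. Your supplementary remarks on the choice of $\lambda$, the constancy of $\dim A^{p,\bullet}_{[0,\lambda],u}(W,E)$, the $\lambda$-independence via Proposition \ref{P:indedec}, and the role of $\dim_{\mathbb{R}} W = 2n$ being even all match the paper's setup.
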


We have the following corollary. See also \cite[Theorem 5.3, Corollary 7.1]{MW3} for the case of analytic torsion on $\mathbb{Z}_2$-graded elliptic complexes.
\begin{corollary}\label{C:relavar1}
Let $W$ be a complex manifold of complex dimension $n$ and let $E$ is a holomorphic bundle with connection $D$ that is compatible and type $(1,1)$ over $W$. Suppose that $H \in A^{0,\bar{1}}(W,\mathbb{C})$ and $\bar{\partial} H=0$. Let $F_1, F_2$ be two flat complex bundles over $W$ of the same dimension, then 
\[
\tau_{\operatorname{holo},p}(W,E \otimes F_1,H) \otimes [\tau_{\operatorname{holo},p}(W,E \otimes F_2 ,H)]^{-1}
\]
in the tensor product of determinant lines
\begin{align}
\big(\Det H^{p,\bullet}_{\bar{\partial}_E}(W,E \otimes F_1,H) \otimes [\Det H^{\bullet,n-p}_{D^{1,0}}(W,E \otimes F_1,H)]^{(-1)^{n+1}}\big) & \nonumber \\  \otimes \big( \Det H^{p,\bullet}_{\bar{\partial}_E}(W,E \otimes F_2 ,H) & \otimes [\Det H^{\bullet,n-p}_{D^{1,0}}(W,E \otimes F_2,H)]^{(-1)^{n+1}} \big)^{-1} \nonumber
\end{align}
is independent of the Hermitian metric $g^{W}$ chosen.
\end{corollary}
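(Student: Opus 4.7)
The plan is to apply the metric variation formula of Theorem~\ref{T:vartheorem1} to both torsions and check that the local contributions cancel in the ratio. Fix a smooth family of Hermitian metrics $g_u^W$ on $W$; Theorem~\ref{T:vartheorem1} gives, for $i=1,2$,
\[
\frac{d}{du}\tau_{\operatorname{holo},p,u}(W, E\otimes F_i, H) = \Bigl(\sum_{k=0,1}(-1)^k \int_W b_{n,p,\bar k, u}^{(i)}\Bigr)\cdot \tau_{\operatorname{holo},p,u}(W, E\otimes F_i, H),
\]
where $b_{n,p,\bar k, u}^{(i)}$ is the local density produced in Lemma~\ref{L:holvarlem2} for the twisted Laplacian on $E\otimes F_i$. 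Since the determinant lines containing the two torsions do not depend on $u$, the ratio differentiates by Leibniz and equals $\bigl(\sum_{k=0,1}(-1)^k\int_W (b^{(1)}_{n,p,\bar k,u}-b^{(2)}_{n,p,\bar k,u})\bigr)$ times itself. It therefore suffices to show $b^{(1)}_{n,p,\bar k,u}=b^{(2)}_{n,p,\bar k,u}$ pointwise.

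The local densities $b^{(i)}_{n,p,\bar k, u}$ arise, by the proof of Lemma~\ref{L:holvarlem2}, as the constant term in the small-time expansion of $\Tr\bigl(\alpha_u\exp(-t\,\square^H_{E\otimes F_i,\bar\partial,u})\bigr)$, and hence by standard heat-kernel asymptotics are universal local invariants in finitely many jets of $g_u^W$, of the compatible $(1,1)$-connection on $E\otimes F_i$, and of $H$. The key observation is that each $F_i$, being flat, admits local flat frames; in such a frame its connection is trivial, and through its $(0,1)$-part so is the induced holomorphic structure. In such a frame $\square^H_{E\otimes F_i,\bar\partial, u}$ is locally identified with $\square^H_{E,\bar\partial, u}\otimes \operatorname{Id}_{F_i}$, and since $\alpha_u$ acts only on form indices, taking the fiber trace over $F_i$ yields
\[
b^{(i)}_{n,p,\bar k, u} = (\dim F_i)\cdot b^{(E)}_{n,p,\bar k, u}.
\]
The hypothesis $\dim F_1=\dim F_2$ then gives $b^{(1)}_{n,p,\bar k,u}=b^{(2)}_{n,p,\bar k,u}$, so the ratio is $u$-independent; since the space of Hermitian metrics on $W$ is path-connected, the ratio is independent of $g^W$.

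The main obstacle is making precise the local reduction to $\square^H_{E,\bar\partial,u}\otimes\operatorname{Id}_{F_i}$: one must check that the flat trivialization of $F_i$ simultaneously trivializes every ingredient of the twisted Laplacian, including the flux term $H\wedge\cdot$ (which does not involve $F_i$) and the adjoint $\bar\partial_{E\otimes F_i,D^{1,0},u}^{*,H}=-\hat\star_u(D^{1,0}_{E\otimes F_i}+\operatorname{conj} H\wedge\cdot)\hat\star_u$, where the tensor-product connection satisfies $D^{1,0}_{E\otimes F_i}=D^{1,0}_E\otimes\operatorname{Id}_{F_i}+\operatorname{Id}_E\otimes D^{1,0}_{F_i}$ with the second term vanishing in a flat frame. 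Given this local factorization, the equality of the local densities follows from the universality of heat-kernel coefficients for Laplace-type operators.
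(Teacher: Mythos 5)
Your proof is correct and follows essentially the same route as the paper: the paper's one-line argument simply notes that $E\otimes F_1$ and $E\otimes F_2$ are locally identical as bundles with connections (since flat bundles of the same rank are locally trivial), so the local densities in Theorem~\ref{T:vartheorem1} coincide and cancel in the ratio. Your version makes the same idea precise by exhibiting a local flat frame in which the twisted Laplacian factors as $\square^H_{E,\bar\partial,u}\otimes\operatorname{Id}_{F_i}$ and extracting $b^{(i)}_{n,p,\bar k,u}=(\dim F_i)\,b^{(E)}_{n,p,\bar k,u}$, which is a useful refinement of, but not a departure from, the paper's argument.
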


Since the two bundles $E \otimes F_1$ and $E \otimes F_2$ are locally identical as bundles with connections, so the local correction terms are the same and cancell out in computing the variation of the twisted Cappell-Miller holomorphic torsion as the untwisted case, cf. \cite[Corollary 4.5]{CM}.

\subsection{Twisted Cappell-Miller holomorphic torsion under flux deformation}
Suppose that the flux form $H$ is deformed smoothly along a one-parameter family with parameter $v \in \mathbb{R}$ in such a way that the cohomology class $[H] \in H^{0,\bar{1}}(W,\mathbb{C})$ is fixed. Then $\frac{d}{dv}H=-\bar{\partial}B$ for some form $B \in A^{0,\bar{0}}(W,\mathbb{C})$ that depends smoothly on $v$. Let $\beta = B \wedge \cdot$. Fix $v_0 \in \mathbb{R}$ and choose $\lambda > 0$ such that there are no eigenvalues of $\square_{E,\bar{\partial},v_0}^H$ of absolute value $\lambda$ and the real parts of the eigenvalues of $\square_{E,\bar{\partial},v_0}^H\big|_{A^{p,\bar{k}}_{(\lambda,\infty)}(W,E)}$ are all greater than $0$. Then there exists $\delta > 0$ small enough that the same holds for the spectrum of $\square_{E,\bar{\partial},v}^H\big|_{A^{p,\bar{k}}_{(\lambda,\infty)}(W,E)}$ for $v \in (v_0-\delta, v_0+\delta)$. For simplicity, we omit the parameter $v$ in the notations in the following discussion.

The proof of the following lemma is similar to the proof of \cite[Lemma 3.7]{MW}, see also \cite[Lemma 4.7]{H1}. We omit the proof.
\begin{lemma}\label{L:holvarlem3}
Under the above assumptions, we have 
\[
\frac{d}{dv}\tau_{p,[0,\lambda]} = -\sum_{k=0,1}(-1)^k  \operatorname{Tr} \big[ \beta \big|_{A^{p,\bar{k}}_{[0,\lambda]}(W,E)} \big] \cdot \tau_{p,[0,\lambda]},
\]
upon identification of determinant lines under the deformation.
\end{lemma}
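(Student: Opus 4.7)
The plan is to follow the strategy of \cite[Lemma 3.7]{MW} for the classical Ray-Singer torsion (see also \cite[Lemma 4.7]{H1}), adapted to the bi-graded Cappell-Miller setting. Because the choice of $\lambda$ and $\delta$ guarantees that $\dim A^{p,\bar k}_{[0,\lambda],v}(W,E)$ is locally constant and the spectral projector $\Pi_{\square_{E,\bar\partial,v}^H,[0,\lambda]}$ varies smoothly, the entire computation can be carried out on the finite-dimensional bi-graded subcomplex, to which the formula \eqref{E:tauddstar} and the canonical isomorphisms $\phi_{C^\bullet},\phi'_{C^\bullet}$ of Subsection~\ref{SS:isomo} apply directly.

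First I would record the infinitesimal structure of the deformation. A short calculation, using that $B\in A^{0,\bar 0}(W,\mathbb{C})$ is even and that $H\wedge\cdot$ commutes with $\beta=B\wedge\cdot$, gives
\[
\tfrac{d}{dv}\bar\partial_E^H \;=\; -\bar\partial B\wedge\cdot \;=\; -[\bar\partial_E^H,\beta].
\]
Since $\partial\circ\operatorname{conj}=\operatorname{conj}\circ\bar\partial$, one obtains analogously $\tfrac{d}{dv}D^{1,0}_{\bar H}=-[D^{1,0}_{\bar H},\bar\beta]$ with $\bar\beta=\bar B\wedge\cdot$, and therefore $\tfrac{d}{dv}\bar\partial_{E,D^{1,0}}^{*,H}=\hat\star[D^{1,0}_{\bar H},\bar\beta]\hat\star$, which is likewise an inner derivation by $\hat\star^{-1}\bar\beta\hat\star$. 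Thus both differentials of the bi-graded complex vary purely by conjugation at first order, and the natural identification of determinant lines along the deformation is the one induced by the exponentials of these conjugating operators on $A^{p,\bullet}_{[0,\lambda],v}(W,E)$.

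The second step is to differentiate the explicit formula \eqref{E:tauddstar} for $\tau_{p,[0,\lambda],v}$ expressed through $\phi_{C^\bullet}$ and $\phi'_{C^\bullet}$. As in \cite[Lemma 3.7]{MW}, the variation of $\phi_{C^\bullet}$ induced by $-[\bar\partial_E^H,\beta]$ is a supertrace: up to the natural identification on $\Det H^{p,\bullet}_{\bar\partial_E}(W,E,H)$, it contributes $-\sum_{k=0,1}(-1)^k\operatorname{Tr}\bigl[\beta|_{A^{p,\bar k}_{[0,\lambda]}(W,E)}\bigr]\cdot\tau_{p,[0,\lambda],v}$. The variation of $\phi'_{C^\bullet}$ produces an analogous supertrace of $\hat\star^{-1}\bar\beta\hat\star$ on the dual side; by the very choice of identification of determinant lines stated in the lemma, this second supertrace is precisely what the identification absorbs, leaving only the $\beta$-contribution and yielding the claimed formula.

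The main obstacle is keeping track of both differentials simultaneously. Unlike the classical Ray-Singer case in \cite{MW}, where only the differential piece of the Laplacian depends on the flux, here $\bar\partial_E^H$ and $\bar\partial_{E,D^{1,0}}^{*,H}$ are both genuine flux-dependent differentials of the bi-graded complex. Verifying carefully — with the sign and fusion conventions of Subsection~\ref{SS:isomo} and the sign factor $(-1)^{S(C^\bullet)}$ in \eqref{E:scbu} — that the $D^{1,0}$-side supertrace is exactly canceled by the identification of determinant lines, so that only $\operatorname{Tr}[\beta|_{A^{p,\bar k}_{[0,\lambda]}(W,E)}]$ survives in the final formula, is the technical heart of the argument.
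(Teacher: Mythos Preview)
Your proposal is essentially correct and follows the same route the paper indicates: the paper omits the proof entirely and simply refers to \cite[Lemma~3.7]{MW} and \cite[Lemma~4.7]{H1}, which is precisely the template you adopt and adapt to the bi-graded situation.

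One point worth flagging: in the subsequent Lemma~\ref{L:holvarlem4} the paper records the deformation of the second differential as $\tfrac{d}{dv}\bar\partial_{E,D^{1,0}}^{*,H}=-[\beta,\bar\partial_{E,D^{1,0}}^{*,H}]$, i.e.\ as a commutator with the \emph{same} operator $\beta=B\wedge\cdot$, whereas you compute it as conjugation by $\hat\star^{-1}\bar\beta\hat\star$. Your computation is the one that follows directly from $\bar\partial_{E,D^{1,0}}^{*,H}=-\hat\star D^{1,0}_{\bar H}\hat\star$ and $\tfrac{d}{dv}\bar H=-\partial\bar B$; the two expressions are not literally identical as operators, and the reconciliation goes through the identification of determinant lines on the $D^{1,0}$-side (via $\varepsilon_{\bar B}$ on $H^{\bullet,n-p}_{D^{1,0}}(W,E,H)$). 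You correctly isolate this as the technical heart of the argument. When you carry it out, keep in mind that the identification acts on \emph{both} determinant factors---$\varepsilon_B$ on $\Det H^{p,\bullet}_{\bar\partial_E}$ and $\varepsilon_{\bar B}$ on $\Det H^{\bullet,n-p}_{D^{1,0}}$---so that after pulling back by both, the residual variation is the single supertrace of $\beta$ on $A^{p,\bullet}_{[0,\lambda]}(W,E)$, matching the stated formula.
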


We also need the following lemma.
\begin{lemma}\label{L:holvarlem4}
Under the above assumptions, we have
\[
\frac{d}{dv}\Big[ \sum_{k=0,1}(-1)^k \LDet_{\theta}(\bar{\partial}_{E,\nabla^{1,0}}^{*,H}\bar{\partial}_E^H)\big|_{A^{p,\bar{k}}_{+,(\lambda,\infty)}(W,E)} \Big] \, = \,  \sum_{k=0,1}(-1)^k  \operatorname{Tr} \big[ \beta \big|_{A^{p,\bar{k}}_{[0,\lambda]}(W,E)} \big] + \sum_{k=0,1}(-1)^k  \int_W c_{n,p,\bar{k}},
\]
where $c_{n,p,\bar{k}}$ is given by a local formula.
\end{lemma}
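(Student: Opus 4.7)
\smallskip

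\noindent\textbf{Proof proposal for Lemma \ref{L:holvarlem4}.} The plan is to mirror the argument for Lemma \ref{L:holvarlem2}, with the metric-variation endomorphism $\alpha_u$ replaced by the multiplication operator $\beta = B\wedge\cdot$. The starting observation is a pair of commutator identities. Since $B \in A^{0,\bar 0}(W,\mathbb{C})$ has even total form degree, it commutes with $H$ in the graded sense, so from $\frac{d}{dv}H=-\bar\partial B$ and the Leibniz rule one computes
\[
\frac{d}{dv}\bar\partial_E^H \;=\; -[\bar\partial_E^H,\beta],
\]
where the bracket is the graded commutator. Applying the conjugation $\operatorname{conj}$ and then $-\hat\star(\cdot)\hat\star$ to the analogous identity for $D^{1,0}_{\bar H}$ (noting that $\operatorname{conj}\bar\partial = \partial\,\operatorname{conj}$) gives
\[
\frac{d}{dv}\bar\partial^{*,H}_{E,D^{1,0}} \;=\; \hat\star\,[D^{1,0}_{\bar H},\bar\beta]\,\hat\star,\qquad \bar\beta := \bar B\wedge\cdot.
\]

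Next, following the proof of Lemma \ref{L:holvarlem2}, set
\[
f(s,v) \;=\; \sum_{k=0,1}(-1)^k\,\Gamma(s)\,\zeta\bigl(s,(\bar\partial^{*,H}_{E,D^{1,0}}\bar\partial_E^H)\big|_{A^{p,\bar k}_{+,(\lambda,\infty)}(W,E)}\bigr),
\]
represented as a Mellin transform of heat traces. Differentiating in $v$ by Duhamel's principle and expanding $\frac{d}{dv}\square^H_{E,\bar\partial}$ via the product rule produces four summands. Using trace cyclicity together with the semigroup identity $e^{-t\square}=e^{-t\square/2}e^{-t\square/2}$, in direct analogy with the manipulation in \eqref{E:traidenti1}, I expect both commutator contributions to consolidate (via $\hat\star^2=\pm 1$) into a single expression involving $\beta$ alone, yielding the flux-analogue of \eqref{E:dduidentiti1}:
\[
\frac{d}{dv}f(s,v) \;=\; s\sum_{k=0,1}(-1)^k\int_0^\infty t^{s-1}\operatorname{Tr}\bigl(\beta\, e^{-t\square^H_{E,\bar\partial}}\big|_{A^{p,\bar k}_{(\lambda,\infty)}(W,E)}\bigr)\,dt.
\]

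From this point the remainder of the proof of Lemma \ref{L:holvarlem2} transfers verbatim: introduce a small shift $\varepsilon>0$ so that $\square^H_{E,\bar\partial}+\varepsilon$ is invertible, split the Mellin integral at $t=1$, and replace the integrand on $(0,1)$ by its difference with the constant term $a_{n,p,\bar k,v}$ in the small-$t$ asymptotic expansion of $\operatorname{Tr}\bigl(\beta\, e^{-t(\square^H_{E,\bar\partial}+\varepsilon)}\big|_{A^{p,\bar k}(W,E)}\bigr)$. The existence of this constant term and its representation as $\int_W c_{n,p,\bar k}$ for a locally computable form $c_{n,p,\bar k}$ follow from standard Seeley-type asymptotics for an elliptic operator of order two on a real manifold of even dimension $2n$. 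The finite-rank correction $-\sum_k(-1)^k\operatorname{Tr}[\beta|_{A^{p,\bar k}_{[0,\lambda]}(W,E)}]$ produced by separating the $[0,\lambda]$ and $(\lambda,\infty)$ spectral subspaces then delivers the asserted formula.

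The main obstacle is the consolidation step: verifying that the contribution from $\frac{d}{dv}\bar\partial^{*,H}_{E,D^{1,0}}$, which a priori involves $\bar\beta$ sandwiched between two copies of $\hat\star$, collapses under trace cyclicity—together with $\hat\star^2=\pm 1$ and the commutation of $\bar\partial_E^H,\bar\partial^{*,H}_{E,D^{1,0}}$ with $\square^H_{E,\bar\partial}$—into \emph{precisely} a trace of $\beta$ against the heat operator, so that the $\beta$ appearing here matches the one in Lemma \ref{L:holvarlem3}. This is the same bookkeeping used in the de Rham setting by \cite[Lemma 3.7]{MW} and \cite[Lemma 4.7]{H1}; once it is carried out, the heat-kernel asymptotic step is routine.
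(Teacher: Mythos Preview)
Your approach is the same as the paper's: differentiate the Mellin heat-trace representation of the log-determinant, use commutator formulas for the $v$-derivatives of $\bar\partial_E^H$ and $\bar\partial^{*,H}_{E,D^{1,0}}$, consolidate via trace cyclicity into $\operatorname{Tr}(\beta\,\square e^{-t\square})$, and finish with the small-time asymptotics exactly as in Lemma~\ref{L:holvarlem2}. The paper's proof is even terser than yours---it simply records the two identities
\[
\frac{d}{dv}\bar\partial_E^H=[\beta,\bar\partial_E^H],\qquad \frac{d}{dv}\bar\partial^{*,H}_{E,D^{1,0}}=-[\beta,\bar\partial^{*,H}_{E,D^{1,0}}],
\]
and then refers to \cite[Lemma~3.5]{MW} for the rest.

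The one substantive difference is your ``consolidation step.'' You write the variation of $\bar\partial^{*,H}_{E,D^{1,0}}$ as $\hat\star[D^{1,0}_{\bar H},\bar\beta]\hat\star$ and flag as the main obstacle the reduction of this $\hat\star$-conjugated $\bar\beta$ term to a trace against $\beta$. The paper bypasses this entirely by asserting the second commutator identity above at the operator level (with $\beta$, not $\hat\star\bar\beta\hat\star$); once that is granted, the computation is literally that of Lemma~\ref{L:holvarlem2} with $\alpha_u$ replaced by $\beta$, and no separate consolidation under the trace is needed. So your outline is faithful to the paper's strategy, but your more cautious intermediate formula is not what the paper uses---the paper claims the stronger operator identity directly and leaves its verification (and the remaining heat-kernel manipulation) to the cited reference.
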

\begin{proof}
Under the deformation, we have
\[
\frac{d}{dv} \bar{\partial}_E^H =[\beta, \bar{\partial}_E^H], \quad \frac{d}{dv} \bar{\partial}_{E,D^{1,0}}^{*,H} = - [\beta, \bar{\partial}_{E,D^{1,0}}^{*,H}].
\]
Following the proof of Lemma 3.5 of \cite{MW}, we obtain the desired variation formula.
\end{proof}

By combining Lemma \ref{L:holvarlem3} with Lemma \ref{L:holvarlem4}, we obtain the main theorem of this subsection. 
\begin{theorem}\label{T:vartheorem2}
Let $W$ be a complex manifold of complex dimension $n$ and let $E$ is a holomorphic bundle with connection $D$ that is compatible and type $(1,1)$ over $W$. Along any one parameter deformation of $H$ that fixes the cohomology class $[H]$ and the natural identification of determinant lines, we have the following variation formula
\[
\frac{d}{dv} \tau_{\operatorname{holo},p}(W,E,H) \, = \, \big(\sum_{k=0,1}(-1)^k  \int_W c_{n,p,\bar{k}}\big) \cdot \tau_{\operatorname{holo},p}(W,E,H).
\] 
\end{theorem}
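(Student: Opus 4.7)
The plan is to simply combine Lemma \ref{L:holvarlem3} and Lemma \ref{L:holvarlem4}, exactly as was done in the metric variation case (Theorem \ref{T:vartheorem1}) by combining Lemma \ref{L:holvarlem1} and Lemma \ref{L:holvarlem2}. More concretely, I would proceed as follows.

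First, I would fix $v_0\in\mathbb{R}$ and choose $\lambda>0$ as in the setup preceding Lemma \ref{L:holvarlem3}, so that for $v$ in a small interval $(v_0-\delta,v_0+\delta)$ the spectrum of $\square_{E,\bar{\partial},v}^H$ avoids the circle of radius $\lambda$ and the dimension of $A^{p,\bullet}_{[0,\lambda]}(W,E)$ is constant in $v$. Under the natural identification of determinant lines provided by the hypothesis, the definition
\[
\tau_{\operatorname{holo},p}(W,E,H) \, = \, \tau_{p,[0,\lambda]} \cdot \prod_{k=0,1}\bigl(\Det_\theta(\bar{\partial}_{E,D^{1,0}}^{*,H}\bar{\partial}_E^H)\bigr|_{A^{p,\bar k}_{+,(\lambda,\infty)}(W,E)}\bigr)^{(-1)^k}
\]
from \eqref{E:tauholopweh} is a smooth function of $v$ with values in a fixed determinant line. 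I would take the logarithmic derivative of both sides.

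Next, I would apply Lemma \ref{L:holvarlem3} to the first factor and Lemma \ref{L:holvarlem4} to the second factor. Adding the two contributions, the finite-dimensional trace terms $\sum_{k=0,1}(-1)^k \operatorname{Tr}\bigl[\beta\bigr|_{A^{p,\bar k}_{[0,\lambda]}(W,E)}\bigr]$ appear with opposite signs and cancel exactly, leaving
\[
\frac{d}{dv}\log \tau_{\operatorname{holo},p}(W,E,H) \, = \, \sum_{k=0,1}(-1)^k \int_W c_{n,p,\bar k},
\]
which is precisely the claimed local formula. The cancellation is also the reason the answer is independent of the auxiliary cut-off $\lambda$, consistent with Proposition \ref{P:indedec}.

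The hard part of this argument is hidden in the two lemmas rather than in the combination step. In particular, Lemma \ref{L:holvarlem4} requires the heat-kernel/residue-trace argument of Mathai--Wu used in the proof of Lemma \ref{L:holvarlem2}: one writes the $\zeta$-derivative as a Mellin transform, differentiates using $\frac{d}{dv}\bar{\partial}_E^H=[\beta,\bar{\partial}_E^H]$ and $\frac{d}{dv}\bar{\partial}_{E,D^{1,0}}^{*,H}=-[\beta,\bar{\partial}_{E,D^{1,0}}^{*,H}]$ (which follow from $\bar{\partial}H=-\bar{\partial}\bar{\partial}B=0$ and the compatibility of $D$), uses the cyclicity of the trace together with the heat semigroup identity to rewrite the commutator contribution in terms of $\alpha_u$ replaced by $\beta$, and finally extracts the constant term in the short-time heat expansion. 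The resulting constant term defines $c_{n,p,\bar k}$; the large-time piece vanishes exponentially, and the trace on the low eigenspace $A^{p,\bar k}_{[0,\lambda]}(W,E)$ is what pairs up against the contribution from Lemma \ref{L:holvarlem3}. Once these two lemmas are in hand, the theorem follows immediately.
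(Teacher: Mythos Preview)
Your proposal is correct and follows exactly the paper's approach: the paper proves Theorem~\ref{T:vartheorem2} in one line by combining Lemma~\ref{L:holvarlem3} with Lemma~\ref{L:holvarlem4}, and your argument spells out precisely this combination, including the cancellation of the finite-dimensional trace terms.
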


As Corollary \ref{C:relavar1}, we have the following corollary. See also \cite[Corollary 7.1]{MW3} for the case of analytic torsion on $\mathbb{Z}_2$-graded elliptic complexes.
\begin{corollary}
Let $W$ be a complex manifold of complex dimension $n$ and let $E$ is a holomorphic bundle with connection $D$ that is compatible and type $(1,1)$ over $W$. Suppose that $H \in A^{0,\bar{1}}(W,\mathbb{C})$ and $\bar{\partial} H=0$. Let $F_1, F_2$ be two flat complex bundles over $W$ of the same dimension, then $\tau_{\operatorname{holo},p}(W,E \otimes F_1,H) \otimes [\tau_{\operatorname{holo},p}(W,E \otimes F_2 ,H)]^{-1}$ is invariant under any deformation of $H$ by an $\bar{\partial}$-exact form, up to natural identification of the determinant lines.
\end{corollary}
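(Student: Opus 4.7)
The plan is to mimic the argument of Corollary \ref{C:relavar1}, replacing the metric-deformation variation formula by the flux-deformation variation formula of Theorem \ref{T:vartheorem2}. Fix a smooth path $H_v$ of $\bar\partial$-closed $(0,\bar 1)$-fluxes with $\frac{d}{dv}H_v=-\bar\partial B_v$ (any two fluxes $H$ and $H+\bar\partial B$ differing by a $\bar\partial$-exact form can be connected by such a path). Applying Theorem \ref{T:vartheorem2} to $E\otimes F_1$ and $E\otimes F_2$ separately yields local densities $c^{(i)}_{n,p,\bar k}$, $i=1,2$, such that
\[
\frac{d}{dv}\log\frac{\tau_{\operatorname{holo},p}(W,E\otimes F_1,H_v)}{\tau_{\operatorname{holo},p}(W,E\otimes F_2,H_v)}=\sum_{k=0,1}(-1)^k\int_W\bigl(c^{(1)}_{n,p,\bar k}-c^{(2)}_{n,p,\bar k}\bigr).
\]
Hence it suffices to prove $c^{(1)}_{n,p,\bar k}(x)=c^{(2)}_{n,p,\bar k}(x)$ pointwise on $W$; integrating in $v$ then gives the stated invariance, with the natural identification of determinant lines being the one already used in Lemma \ref{L:holvarlem3}.

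Next I would unpack the proof of Lemma \ref{L:holvarlem4}: the density $c_{n,p,\bar k}$ arises as the $t^0$-coefficient in the small-time heat kernel asymptotic expansion of $\operatorname{Tr}(\beta_v\exp(-t\,\square^{H_v}_{E\otimes F_i,\bar\partial}))$ with $\beta_v=B_v\wedge\cdot$, after the finite-dimensional spectral subtractions already performed there. By standard Seeley--Wodzicki symbol calculus this coefficient is an integral $\int_W\omega_i$ whose integrand $\omega_i(x)$ depends only on a finite jet at $x$ of the symbols of $\beta_v$ and of $\square^{H_v}_{E\otimes F_i,\bar\partial}$, i.e.\ on $g^W$, on $H_v$ and $B_v$ (both $\mathbb C$-valued, hence independent of $i$), and on the compatible $(1,1)$ connection on $E\otimes F_i$.

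I would then exploit the local triviality of the flat bundles exactly as in the remark following Corollary \ref{C:relavar1}. On a contractible open $U\subset W$, choose a parallel frame for $F_i$; under such a frame the flat connection on $F_i|_U$ is the trivial one, so $(E\otimes F_i)|_U\cong E|_U^{\oplus r}$ as holomorphic bundles with compatible $(1,1)$ connection, where $r=\dim F_1=\dim F_2$. Under this identification $\square^{H_v}_{E\otimes F_i,\bar\partial}|_U$ and $\beta_v|_U$ are literally the same operators for $i=1$ and $i=2$, so the pointwise densities $\omega_i(x)$ coincide. This forces $c^{(1)}_{n,p,\bar k}=c^{(2)}_{n,p,\bar k}$ and closes the argument, in exact parallel with the metric case.

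The main obstacle, as in Corollary \ref{C:relavar1}, is isolating the local piece $c_{n,p,\bar k}$ from the genuinely global finite-dimensional trace $\operatorname{Tr}\bigl[\beta_v\big|_{A^{p,\bar k}_{[0,\lambda]}(W,E\otimes F_i)}\bigr]$ that appears in both Lemma \ref{L:holvarlem3} and Lemma \ref{L:holvarlem4}: this trace depends on $F_i$, but cancels when one differentiates the full torsion of \eqref{E:tauholopweh}, namely $\tau_{\operatorname{holo},p}(W,E\otimes F_i,H_v)=\tau_{p,[0,\lambda],v}\cdot\prod_{k}\bigl(\Det_\theta(\bar\partial^{\ast,H}_{E\otimes F_i,D^{1,0}}\bar\partial^H_{E\otimes F_i})\big|_{A^{p,\bar k}_{+,(\lambda,\infty)}(W,E\otimes F_i)}\bigr)^{(-1)^k}$. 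Once that cancellation is verified and the locality of $c_{n,p,\bar k}$ is established, the parallel-trivialization argument immediately produces the equality of the surviving local densities and hence the corollary.
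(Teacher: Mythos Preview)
Your proposal is correct and follows essentially the same approach as the paper: the paper's entire argument is the sentence ``As Corollary~\ref{C:relavar1}'' together with the justification given there that $E\otimes F_1$ and $E\otimes F_2$ are locally identical as bundles with connections, so the local correction densities coincide and cancel in the variation of the ratio. Your writeup simply fleshes out this sketch, making explicit the parallel-frame trivialization of the flat bundles and the cancellation of the global finite-dimensional trace between Lemma~\ref{L:holvarlem3} and Lemma~\ref{L:holvarlem4}.
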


\section{Twisted Cappell-Miller analytic torsion}

In this section we first define the de Rham bi-graded complex twisted by a flux form $H$ and its (co)homology groups. Then we define the Cappell-Miller analytic torsion for the twisted de Rham bi-graded complex. We obtain the variation theorems of the twisted Cappell-Miller analytic torsion under metric and flux deformations. In a recent preprint of \cite{Su}, Su also briefly discussed the twisted Cappell-Miller analytic torsion when dimension of the manifold $M$ is odd.

\subsection{The twisted de Rham complexes} Let $M$ be a closed oriented $m$-dimensional smooth manifold and let $\mathcal{E}$ be a complex vector bundle over $M$ endowed with a flat connection $\nabla$. We denote by $\Omega^p(M,\mathcal{E})$ the space of $p$-forms with values in the flat bundle $\mathcal{E}$, i.e., $\Omega^p(M,\mathcal{E})=\Gamma(M,\wedge^p(T^*M)_{\mathbb{R}}\otimes \mathcal{E})$ and by
\[
\nabla : \Omega^\bullet(M,\mathcal{E}) \to \Omega^{\bullet+1}(M,\mathcal{E})
\]
the covariant differential induced by the flat connection on $\mathcal{E}$. Fix a Riemannian metric $g^M$ on $M$ and let $\star : \Omega^\bullet(M,\mathcal{E}) \to \Omega^{m-\bullet}(M,\mathcal{E})$ denote the Hodge $\star$-operator. We choose a Hermitian metric $h^\mathcal{E}$ so that together with the Riemannian metric $g^M$ we can define a scalar product $<\cdot,\cdot>_M$ on $\Omega^{ \bullet}(M,\mathcal{E})$. Define the chirality operator $\Gamma = \Gamma(g^M):\Omega^\bullet(M,\mathcal{E}) \to \Omega^\bullet(M,\mathcal{E})$ by the formula, cf. \cite[(7-1)]{BK3},
\begin{equation}\label{E:chirality}
\Gamma \omega := i^r(-1)^{\frac{q(q+1)}{2}} \star \omega, \quad \omega \in \Omega^q(M,\mathcal{E}), 
\end{equation}
where $r$ given as above by $r=\frac{m+1}{2}$ if $m$ is odd and $r=\frac{m}{2}$ if $m$ is even. The numerical factor in \eqref{E:chirality} has been chosen so that $\Gamma^2=\operatorname{Id}$, cf. Proposition 3.58 of \cite{BGV}.

Assume that $\mathcal{H}$ is an odd degree closed differential form on $M$. Let $\Omega^{\bar{0}/\bar{1}}(M,\mathcal{E}):=\Omega^{\even/\odd}(M,\mathcal{E})$ and $\nabla^\mathcal{H}:=\nabla+\mathcal{H} \wedge \cdot$. We assume that $\mathcal{H}$ does not contain a 1-form component, which can be absorbed in the flat connection $\nabla$.

It is not difficult to check that $(\nabla^\mathcal{H})^2=0$.
Clearly, for each $k=0,1$, $\nabla^\mathcal{H} : \Omega^{\bar k}(M,\mathcal{E}) \to \Omega^{\overline{k+1}}(M,\mathcal{E})$. Hence we can consider the following twisted de Rham complex:
\begin{equation}\label{E:ztcomplex}
\big(\, \Omega^{  \bullet}(M,\mathcal{E}), \nabla^\mathcal{H} \, \big): \cdots \stackrel{\nabla^\mathcal{H}}{\longrightarrow} \Omega^{\bar 0}(M,\mathcal{E} ) \stackrel{\nabla^\mathcal{H}}{\longrightarrow} \Omega^{\bar 1}(M,\mathcal{E} ) \stackrel{\nabla^\mathcal{H}}{\longrightarrow} \Omega^{\bar 0}(M,\mathcal{E} )\stackrel{\nabla^\mathcal{H}}{\longrightarrow} \cdots.
\end{equation}
We define the {\em twisted de Rham cohomology group of $(\Omega^{\bullet}(M,\mathcal{E}),\nabla^\mathcal{H})$} as
\[
H^{\bar{k}}(M,\mathcal{E},\mathcal{H})  \, := \, \frac{\Ker(\nabla^\mathcal{H} : \Omega^{\bar{k}}(M,\mathcal{E}) \to \Omega^{\overline{k+1}}(M,\mathcal{E}))}{\im (\nabla^\mathcal{H} : \Omega^{\overline{k-1}}(M,\mathcal{E}) \to \Omega^{\bar{k}}(M,\mathcal{E}))}, \quad k=0,1.
\] 
The groups $H^{\bar{k}}(M,\mathcal{E},\mathcal{H}),k=0,1$ are independent of the choice of the Riemannian metric on $M$ or the Hermitian metric on $\mathcal{E}$. Suppose that $\mathcal{H}$ is repalced by $\mathcal{H}'=\mathcal{H}-d\mathcal{B}$ for some $\mathcal{B} \in \Omega^{\bar{0}}(M)$, then there is an isomorphism $\varepsilon_\mathcal{B}:=e^\mathcal{B} \wedge \cdot : \Omega^{  \bullet}(M,\mathcal{E}) \to \Omega^{  \bullet}(M,\mathcal{E})$ satisfying 
\[
\varepsilon_\mathcal{B} \circ \nabla^\mathcal{H} = \nabla^{\mathcal{H}'} \circ \varepsilon_\mathcal{B}.
\]
Therefor $\varepsilon_\mathcal{B}$ induces an isomorphism on the twisted de Rham cohomology, also denote by $\varepsilon_\mathcal{B}$,
\begin{equation}\label{E:epsiso}
\varepsilon_\mathcal{B}: H^\bullet(M,\mathcal{E},\mathcal{H}) \to H^\bullet(M,\mathcal{E},\mathcal{H}').
\end{equation}
Denote by $\nabla^{\mathcal{H},*}$ the adjoint of $\nabla^\mathcal{H}$ with respect to the scalar product $<\cdot,\cdot>_M$. Then the Laplacian
\[
\Delta^\mathcal{H}:=\nabla^{\mathcal{H},*}\nabla^\mathcal{H} + \nabla^\mathcal{H}\nabla^{\mathcal{H},*},
\]
is an elliptic operator and therefore the complex \eqref{E:ztcomplex} is elliptic. By Hodge theory, we have the isomorphism $\Ker \Delta^\mathcal{H} \cong H^{\bullet}(M,\mathcal{E},\mathcal{H})$. For more details of the twisted de Rham cohomology, cf. for example \cite{MW}.

Now denote by $\nabla'$ the connection on $\mathcal{E}$ dual to the connection $\nabla$, \cite[Subsection 10.1]{BK3}. We denote by $\mathcal{E}'$ the flat bundle $(\mathcal{E},\nabla')$, referring to $\mathcal{E}'$ as the dual of the flat vector bundle $\mathcal{E}$. We emphasize that, similar to the untwisted case, cf. \cite[(10-8)]{BK3} or \cite[(8.4)]{CM}, 
\[
\nabla^{\mathcal{H},*} = \Gamma {\nabla'}^\mathcal{H} \Gamma,
\]
where ${\nabla'}^\mathcal{H}=\nabla' + \mathcal{H} \wedge \cdot$.

Let $\nablahsharp:= \Gamma \nabla^\mathcal{H}  \Gamma$, then $(\nablahsharp)^2=0$. Clearly, $ \nablahsharp : \Omega^{\bar k}(M,\mathcal{E}) \to \Omega^{\overline{k-1}}(M,\mathcal{E})$. Hence we can consider the following twisted de Rham complex:
\begin{equation}\label{E:ztcomplex21}
\big(\, \Omega^{  \bullet}(M,\mathcal{E}), \nablahsharp \, \big): \cdots \stackrel{\nablahsharp}{\longleftarrow} \Omega^{\bar 0}(M,\mathcal{E}) \stackrel{\nablahsharp}{\longleftarrow} \Omega^{\bar 1}(M,\mathcal{E} ) \stackrel{\nablahsharp}{\longleftarrow} \Omega^{\bar 0}(M,\mathcal{E} )\stackrel{\nablahsharp}{\longleftarrow} \cdots.
\end{equation} 
We also define the homology group of the complex $(\Omega^{\bullet}(M,\mathcal{E}),\nablahsharp)$ as 
\[
H_{\bar{k}}(\Omega^{\bullet}(M,\mathcal{E}),\nablahsharp) \, := \, \frac{\operatorname{Ker}(\nablahsharp : \Omega^{\bar{k}}(M,\mathcal{E}) \to \Omega^{\overline{k-1}}(M,\mathcal{E}))}{\operatorname{Im} (\nablahsharp : \Omega^{\overline{k+1}}(M,\mathcal{E}) \to \Omega^{\bar{k}}(M,\mathcal{E}))}, \quad k=0,1.
\]
Similarly, the groups $H_{\bar{k}}(\Omega^{\bullet}(M,\mathcal{E}),\nablahsharp),k=0,1$ are independent of the choice of the Riemannian metric on $M$ or the Hermitian metric on $\mathcal{E}$. Suppose that $\mathcal{H}$ is repalced by $\mathcal{H}''=\mathcal{H}-\delta \mathcal{B}'$ for some $\mathcal{B}' \in \Omega^{\bar{0}}(M)$ and $\delta$ the adjoint of $d$ with respect to the scalar product induced by the Riemannian metric $g^M$, then there is an isomorphism $\varepsilon_{\mathcal{B}'}:=e^{\mathcal{B}'} \wedge \cdot : \Omega^{\bullet}(M,\mathcal{E}) \to \Omega^{ \bullet}(M,\mathcal{E})$ satisfying 
\[
\varepsilon_{\mathcal{B}'} \circ \nabla^{\mathcal{H},\sharp} = \nabla^{\mathcal{H}'',\sharp} \circ \varepsilon_{\mathcal{B}'}.
\]
Therefor $\varepsilon_{\mathcal{B}'}$ induces an isomorphism on the twisted de Rham homology, also denote by $\varepsilon_{\mathcal{B}'}$,
\begin{equation}\label{E:epsiso12}
\varepsilon_{\mathcal{B}'}: H_\bullet(\Omega^{\bullet}(M,\mathcal{E}),\nablahsharp) \to H_\bullet(\Omega^{\bullet}(M,\mathcal{E}),\nabla^{\mathcal{H}'',\sharp}).
\end{equation}
Denote by $\nabla^{\mathcal{H},\sharp,*}$ the adjoint of $\nabla^{\mathcal{H},\sharp}$ with respect to the scalar product $<\cdot,\cdot>_M$, then we have the following equalities.
\[
\nabla^{\mathcal{H},\sharp,*}={\nabla'}^\mathcal{H}, \qquad {\Delta'}^\mathcal{H}:={\nabla'}^{\mathcal{H},*}{\nabla'}^\mathcal{H} + {\nabla'}^\mathcal{H}{\nabla'}^{\mathcal{H},*}=\nabla^{\mathcal{H},\sharp}\nabla^{\mathcal{H},\sharp,*}+\nabla^{\mathcal{H},\sharp,*}\nabla^{\mathcal{H},\sharp}.
\]
Again the Laplacian ${\Delta'}^\mathcal{H}$ is an elliptic operator and therefore the complex \eqref{E:ztcomplex21} is elliptic. By Hodge theory, we have the isomorphism $\Ker {\Delta'}^\mathcal{H} \cong H_{\bullet}(\Omega^{\bullet}(M,\mathcal{E}),\nablahsharp)$. In particular, for $k=0,1$,
\begin{equation}\label{E:hocodurel1p}
H_{\bar k}(\Omega^{\bullet}(M,\mathcal{E}),\nablahsharp) \cong H^{\bar k}(M,\mathcal{E}',\mathcal{H}).
\end{equation}


\subsection{Definition of twisted Cappell-Miller analytic torsion}

Note that the flat Laplacian, defined as $$\Delta^{\mathcal{H},\sharp}:=(\nabla^\mathcal{H}+\nablahsharp)^2,$$ maps $\Omega^{\bar{k}}(M,\mathcal{E})$ into itself. Suppose $\mathcal{I}$ is an interval of the form $[0,\lambda],(\lambda,\mu]$, or $(\lambda,\infty)$ $(\mu>\lambda\ge0)$. Denote by $\Pi_{\Delta^{\mathcal{H},\sharp},\mathcal{I}}$ the spectral projection of $\Delta^{\mathcal{H},\sharp}$ corresponding to the set of eigenvalues, whose absolute values lie in $\mathcal{I}$. Set \begin{equation}\label{E:OmecalI}\notag     
\Omega^{\bar{k}}_{\mathcal{I}}(M,\mathcal{E})\ := \ \Pi_{\Delta^{\mathcal{H},\sharp},\mathcal{I}}\big(\, \Omega^{\bar{k}}(M,\mathcal{E})\,\big)\ \subset\  \Omega^{\bar{k}}(M,\mathcal{E}). 
\end{equation}
If the interval $\mathcal{I}$ is bounded, then the space $\Omega^{\bar{k}}_\mathcal{I}(M,\mathcal{E})$ is finite dimensional.
Since $\nabla^\mathcal{H}$ and $\nablahsharp$ commute with $\Delta^\sharp_\mathcal{H}$, the subspace $\Omega^{\bullet}_\mathcal{I}(M,\mathcal{E})$ is a subcomplex of the twisted de Rham bi-complex $(\Omega^{\bullet}(M,\mathcal{E}),\nabla^\mathcal{H}, \nablahsharp)$. Clearly, for each $\lambda\ge 0$, the complex $\Omega^{\bullet}_{(\lambda,\infty)}(M,\mathcal{E})$ is doubly acyclic, i.e. $H^{\bar k}(\Omega^{\bullet}_{(\lambda,\infty)}(M,\mathcal{E}),\nabla^\mathcal{H})=0$ and $H_{\bar k}(\Omega^{\bullet}_{(\lambda,\infty)}(M,\mathcal{E}),\nabla^{\mathcal{H},\sharp})=0$. Since
\begin{equation}\label{E:Ome=Ome0+Ome>0}
    \Omega^{\bar{k}}(M,\mathcal{E}) \ = \  \Omega^{\bar{k}}_{[0,\lambda]}(M,\mathcal{E})\,\oplus\,\Omega^{\bar{k}}_{(\lambda,\infty)}(M,\mathcal{E}),
\end{equation}
the homology $H_{\bar{k}}(\Omega^{\bullet}_{[0,\lambda]}(M,\mathcal{E}),\nablahsharp)$ of the complex $(\Omega^{\bullet}_{[0,\lambda]}(M,\mathcal{E}),\nablahsharp)$ is naturally isomorphic to the homology $H_{\bar{k}}(\Omega^{\bullet}(M,\mathcal{E}),\nablahsharp) \cong H^{\bar{k}}(M,\mathcal{E}',\mathcal{H})$, cf. \eqref{E:hocodurel1p}, and the cohomology $H^{\bar{k}}(\Omega^{\bullet}_{[0,\lambda]}(M,\mathcal{E}),\nabla^\mathcal{H})$ of the complex $(\Omega^{\bullet}_{[0,\lambda]}(M,\mathcal{E}),\nabla^\mathcal{H})$ is naturally isomorphic to the cohomology $H^{\bar{k}}(M,\mathcal{E},\mathcal{H})$.

Similar to the $\mathbb{Z}$-graded case, cf. \cite[Section 8]{CM}, the chirality operator $\Gamma$ establishes a complex linear isomorphism of the homology groups with cohomology groups
\[
H_{\bar{k}}(\Omega^{\bullet}_{[0,\lambda]}(M,\mathcal{E}),\nablahsharp) \cong H^{\overline{m-k}}(\Omega^{\bullet}_{[0,\lambda]}(M,\mathcal{E}),\nabla^\mathcal{H}) \cong H^{\overline{m-k}}(M,\mathcal{E},\mathcal{H}). 
\]
In particular, we have the following isomorphism
\begin{equation}\label{E:detis}
\Det H_\bullet(\Omega^{\bullet}(M,\mathcal{E}),\nablahsharp) \cong \Det H_{\bullet}(\Omega^{\bullet}_{[0,\lambda]}(M,\mathcal{E}),\nablahsharp) \cong \big(\Det H^\bullet(M,\mathcal{E},\mathcal{H})\big)^{(-1)^m}.
\end{equation}
Using the Poincar\'{e} duality, we also have the isomorphism 
\begin{equation}\label{E:poin}
\Det H^{\overline{m-k}}(M,\mathcal{E},\mathcal{H})^{-1} \cong \Det H^{\bar{k}}(M,\mathcal{E}',\mathcal{H}), 
\end{equation}
where $\mathcal{E}'$ is the dual vetor bundle of the vector bundle $E$. Therefore, we have
\begin{align}\label{E:isomo}
& \Det H^\bullet(M,\mathcal{E},\mathcal{H}) \otimes \Det H^{\overline{m-\bullet}}(M,\mathcal{E},\mathcal{H})^{-1} \nonumber \\
\cong &  \Det H^\bullet(M,\mathcal{E},\mathcal{H}) \otimes \Det H^\bullet(M,\mathcal{E}',\mathcal{H}) \quad \text{by} \, \eqref{E:poin} \\
\cong & \Det H^\bullet(M,\mathcal{E}\oplus \mathcal{E}',\mathcal{H}). \nonumber
\end{align}

For each $k=0,1$, set
\begin{equation}\label{E:omegaspli11}
\begin{array}{l}
\Omega^{\bar k}_{+,\mathcal{I}}(M,\mathcal{E}):= \Ker(\nabla^\mathcal{H} \nabla^{\mathcal{H},\sharp}) \cap \Omega^{\bar k}_{\mathcal{I}}(M,\mathcal{E}),\\
\\
\Omega^{\bar k}_{-,\mathcal{I}}(M,\mathcal{E}):= \Ker(\nabla^{\mathcal{H},\sharp}\nabla^\mathcal{H}) \cap \Omega^{\bar k}_{\mathcal{I}}(M,\mathcal{E}).
\end{array}
\end{equation}
Clearly,
\[
\Omega^{\bar k}_{\mathcal{I}}(M,\mathcal{E}) = \Omega^{\bar k}_{+,\mathcal{I}}(M,\mathcal{E}) \oplus \Omega^{\bar k}_{-,\mathcal{I}}(M,\mathcal{E}), \ \text{if} \ 0 \notin \mathcal{I}.
\]

Let $\theta \in (0,2\pi)$ be an Agmon angle, cf. \cite{Sh}. Since the leading symbol of $\nabla^{\mathcal{H},\sharp}\nabla^\mathcal{H}$ is positive definite, the $\zeta$-regularized determinant $\Det_\theta (\nabla^{\mathcal{H},\sharp}\nabla^\mathcal{H})\big|_{\Omega^{\bar k}_{+,\mathcal{I}}(M,\mathcal{E})}$ is independent of the choice of $\theta$.

For any $0 \le \lambda \le \mu \le \infty$, one easily sees that 
\begin{align*}
\prod_{k=0,1} \big(\,\Det_\theta (\nabla^{\mathcal{H},\sharp}\nabla^\mathcal{H})\big|_{\Omega^{\bar k}_{+,(\lambda,\infty)}(M,\mathcal{E})} \, \big)^{(-1)^k}  = & \Big[\prod_{k=0,1} \big(\,\Det_\theta (\nabla^{\mathcal{H},\sharp}\nabla^\mathcal{H})\big|_{\Omega^{\bar k}_{+,(\lambda,\mu)}(M,\mathcal{E})} \, \big)^{(-1)^k} \Big] \\
& \cdot \Big[\prod_{k=0,1} \big(\,\Det_\theta (\nabla^{\mathcal{H},\sharp}\nabla^\mathcal{H})\big|_{\Omega^{\bar k}_{+,(\mu,\infty)}(M,\mathcal{E})} \, \big)^{(-1)^k} \Big]  
\end{align*}

For any $\lambda \ge 0$, denote by $\tau_{[0,\lambda]}$ the Cappell-Miller torsion of the twisted de Rham bi-graded complex $\big(\Omega^\bullet_{[0,\lambda]}(M,\mathcal{E}),\nabla^\mathcal{H}, \nablahsharp \big)$. Via the isomorphisms $H_{\bullet}(\Omega^{\bullet}_{[0,\lambda]}(M,\mathcal{E}),\nablahsharp) \cong H_\bullet(\Omega^{\bullet}(M,\mathcal{E}),\nablahsharp)$ and $H^\bullet(\Omega^\bullet_{[0,\lambda]}(M,\mathcal{E}),\nabla^\mathcal{H}) \cong H^\bullet(M,\mathcal{E},\mathcal{H})$ and \eqref{E:isomo}, we can view $\tau_{[0,\lambda]}$ as an element of
$\Det H^\bullet(M,\mathcal{E} \oplus \mathcal{E}',\mathcal{H})$. 
In particular, if $m$ is odd, then, up to an isomorphism,
\begin{equation}\label{E:tauoddre}
\tau_{[0,\lambda]} \in  \Det H^\bullet(M,\mathcal{E},\mathcal{H}) \otimes \Det H^\bullet(M,\mathcal{E},\mathcal{H}) \cong \Det H^\bullet(M, \mathcal{E} \oplus \mathcal{E}', \mathcal{H}).
\end{equation}

The proof of the following lemma is similar to the proof of \cite[Theorem 8.3]{CM}, we omit the proof.
\begin{lemma}
The element 
\[
\tau_{[0,\lambda]} \cdot \prod_{k=0,1}{ \big(\, \Det_\theta (\nabla^{\mathcal{H},\sharp}\nabla^\mathcal{H})\big|_{\Omega^{\bar k}_{+,(\lambda,\infty)}(M,\mathcal{E})}\,\big)^{(-k)}}
\]
is independent of the choice of $\lambda$.
\end{lemma}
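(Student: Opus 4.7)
The plan is to mirror the proof of Proposition~\ref{P:indedec} in infinite dimensions, using the spectral decomposition of $\Delta^{\mathcal{H},\sharp}$ to reduce the statement to the finite-dimensional algebraic identity of Proposition~\ref{P:acydob1} together with the multiplicativity of the $\zeta$-regularized determinant over disjoint spectral intervals. Fix $0 \le \lambda < \mu$ such that neither $\lambda$ nor $\mu$ equals the absolute value of an eigenvalue of $\Delta^{\mathcal{H},\sharp}$. Since both $\nabla^\mathcal{H}$ and $\nablahsharp$ commute with $\Delta^{\mathcal{H},\sharp}$, the decomposition
\[
\Omega^{\bar{k}}_{[0,\mu]}(M,\mathcal{E}) \,=\, \Omega^{\bar{k}}_{[0,\lambda]}(M,\mathcal{E}) \,\oplus\, \Omega^{\bar{k}}_{(\lambda,\mu]}(M,\mathcal{E})
\]
is a splitting of $\mathbb{Z}_2$-graded bi-graded subcomplexes that restricts to the $\pm$-pieces \eqref{E:omegaspli11}.

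The first step is to establish multiplicativity of the Cappell-Miller torsion under this direct sum. Because $\Delta^{\mathcal{H},\sharp}$ is invertible on $\Omega^\bullet_{(\lambda,\mu]}(M,\mathcal{E})$, the first assertion of Proposition~\ref{P:acydob1} shows that this piece is doubly acyclic, so the natural inclusions induce canonical isomorphisms
\[
H^{\bullet}(\Omega^{\bullet}_{[0,\lambda]}(M,\mathcal{E}),\nabla^\mathcal{H}) \,\cong\, H^{\bullet}(\Omega^{\bullet}_{[0,\mu]}(M,\mathcal{E}),\nabla^\mathcal{H}),
\]
\[
H_{\bullet}(\Omega^{\bullet}_{[0,\lambda]}(M,\mathcal{E}),\nablahsharp) \,\cong\, H_{\bullet}(\Omega^{\bullet}_{[0,\mu]}(M,\mathcal{E}),\nablahsharp).
\]
Applying Proposition~\ref{P:indedec} to the finite-dimensional bi-graded complex $\Omega^{\bullet}_{[0,\mu]}(M,\mathcal{E})$ with internal cut-off $\lambda$ then yields, under the identifications above and the fusion isomorphism \eqref{E:grafus},
\[
\tau_{[0,\mu]} \,=\, \tau_{[0,\lambda]} \cdot \tau_{(\lambda,\mu]}.
\]

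The second step invokes Proposition~\ref{P:acydob1} for the acyclic piece itself, giving
\[
\tau_{(\lambda,\mu]} \,=\, \prod_{k=0,1}\Det\bigl( \nablahsharp\nabla^\mathcal{H} \big|_{\Omega^{\bar k}_{+,(\lambda,\mu]}(M,\mathcal{E})} \bigr)^{(-1)^k}
\]
as ordinary linear-algebra determinants. The third step is to combine this with the standard multiplicativity of the $\zeta$-regularized determinant,
\[
\Det_\theta\bigl( \nablahsharp\nabla^\mathcal{H}\big|_{\Omega^{\bar k}_{+,(\lambda,\infty)}(M,\mathcal{E})} \bigr) \,=\, \Det\bigl( \nablahsharp\nabla^\mathcal{H}\big|_{\Omega^{\bar k}_{+,(\lambda,\mu]}(M,\mathcal{E})} \bigr) \cdot \Det_\theta\bigl( \nablahsharp\nabla^\mathcal{H}\big|_{\Omega^{\bar k}_{+,(\mu,\infty)}(M,\mathcal{E})} \bigr),
\]
which follows from additivity of $\zeta$-functions under spectral splitting. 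Substituting these two identities into $\tau_{[0,\mu]} \cdot \prod_k ( \Det_\theta \cdots |_{(\mu,\infty)})^{(-1)^k}$ causes the $(\lambda,\mu]$ factors to cancel exactly against each other, producing the expression with cut-off $\lambda$; independence for arbitrary pairs then follows by transitivity.

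The main obstacle is the bookkeeping around the fusion isomorphism and the sign $(-1)^{S(C^\bullet)}$ of \eqref{E:scbu}: one must check that the identifications of determinant lines induced by the spectral splitting are compatible with the fusion map so that the multiplicativity $\tau_{[0,\mu]} = \tau_{[0,\lambda]} \cdot \tau_{(\lambda,\mu]}$ holds on the nose, with no spurious sign. The sign conventions \eqref{E:grafus} and \eqref{E:scbu} were chosen precisely for this purpose, so the verification is a direct computation; everything else is a routine combination of spectral theory and the propositions of Section~2.
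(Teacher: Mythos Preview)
Your argument is correct and coincides with the paper's intended approach: the paper omits the proof but displays the spectral multiplicativity of the $\zeta$-determinant immediately before the lemma, and combining this with Proposition~\ref{P:indedec} and Proposition~\ref{P:acydob1} exactly as you do is precisely the route of \cite[Theorem~8.3]{CM}. Note that the exponent $(-k)$ in the lemma's statement is evidently a typo for $(-1)^k$ (compare \eqref{E:tauholopweh}, \eqref{E:acytauddsta}, \eqref{E:taucddstarinde}, and the displayed multiplicativity identity preceding the lemma), which you have correctly read.
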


We now define the Cappell-Miller analytic torsion for the de Rham complex twisted by a flux.
\begin{definition}\label{D:cmantorsi}
Let $(\mathcal{E},\nabla)$ be a complex vector bundle over a connected oriented $m$-dimensional closed Riemannian manifold $M$ and $\mathcal{H}$ is a closed odd degree form, other than one form. Further, let $\nabla^\mathcal{H} = \nabla + \mathcal{H} \wedge \cdot$ and $\nablahsharp = \Gamma \nabla^\mathcal{H} \Gamma$. Let $\theta \in (0,2\pi)$ be an Agmon angle for the operator $\Delta^{\mathcal{H},\sharp}:=(\nabla^\mathcal{H}+\nablahsharp)^2$. The Cappell-Miller torsion $\tau(\nabla,\mathcal{H})$ for the twisted de Rham bi-graded complex $(\Omega^\bullet(M,\mathcal{E}),\nabla^\mathcal{H}, \nablahsharp)$ is an element of $\Det H^\bullet(M,\mathcal{E},\mathcal{H})  \otimes \big( \Det  H^\bullet(M,\mathcal{E},\mathcal{H}) \big)^{(-1)^{m+1}}$ defined as follows
\begin{equation}\label{E:cmttordef}
\tau(\nabla,\mathcal{H}) \, := \, \tau_{[0,\lambda]} \cdot  \prod_{k=0,1}{ \big(\, \Det_\theta (\nabla^{\mathcal{H},\sharp}\nabla^\mathcal{H})\big|_{\Omega^{\bar k}_{+,(\lambda,\infty)}(M,\mathcal{E})}\,\big)^{(-k)}}.
\end{equation} 
\end{definition}

\subsection{Twisted Cappell-Miller analytic torsion under metric and flux deformations}
In this subsection we obtain the variation formulas for the twisted Cappell-Miller analytic torsion $\tau(\nabla,\mathcal{H})$ under the metric and flux deformations. In particular, we show that if the manifold $M$ is an odd dimensional closed oriented manifold, then the twisted Cappell-Miller analytic torsion is independent of the Riemannian metric and the representative $\mathcal{H}$ in the cohomology class $[\mathcal{H}]$. See also \cite{Su}.

The proof of the following theorem is similar to the proof of Theorem \ref{T:vartheorem1}. We omit the proof.
\begin{theorem}\label{T:anvartheorem1}
Let $(\mathcal{E},\nabla)$ be a complex vector bundle over a $m$-dimensional connected oriented closed Riemannian manifold $M$ and $\mathcal{H}$ is a closed odd degree form, other than one form. Let $g^M_v, a \le v \le b$, be a smooth family of Riemannian metrics on $M$, then the corresponding twisted Cappell-Miller analytic torsion $\tau_v(\nabla,\mathcal{H})$ varies smoothly and the variation of $\tau_v(\nabla,\mathcal{H})$ is given by a local formula
\[
\frac{d}{dv} \tau_v(\nabla,\mathcal{H}) = \big( \sum_{k=0,1}(-1)^k \int_M b_{m/2,\bar{k},v} \big) \cdot \tau_v(\nabla,\mathcal{H}).
\]
In particular, if the dimension of the manifold $M$ is odd, then twisted Cappell-Miller analytic torsion $\tau(\nabla,\mathcal{H})$ is independent of the Riemannian metric $g^M$.
\end{theorem}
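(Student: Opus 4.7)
The plan is to mirror the two-step strategy used for the holomorphic case in Theorem \ref{T:vartheorem1}, splitting $\tau_v(\nabla,\mathcal{H})$ as in \eqref{E:cmttordef} into the finite-dimensional ``small eigenvalue'' contribution $\tau_{[0,\lambda],v}$ and the ``large eigenvalue'' zeta-regularized determinant contribution, and differentiating each piece separately. Since $\nabla^\mathcal{H}$ is independent of the Riemannian metric, the metric enters only through the chirality operator $\Gamma = \Gamma(g^M_v)$, hence through $\nabla^{\mathcal{H},\sharp} = \Gamma \nabla^\mathcal{H} \Gamma$ and $\Delta^{\mathcal{H},\sharp}$. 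Set $\alpha_v := \Gamma_v^{-1}\frac{d}{dv}\Gamma_v$; from $\Gamma^2=\operatorname{Id}$ one obtains the commutator identity $\frac{d}{dv}\nabla^{\mathcal{H},\sharp} = [\alpha_v,\nabla^{\mathcal{H},\sharp}]$, which plays the role of the corresponding identity for $\hat\star_u$ in the proof of Lemma \ref{L:holvarlem2}.

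First I would establish the analog of Lemma \ref{L:holvarlem1}: on the finite-dimensional piece $\Omega^\bullet_{[0,\lambda]}(M,\mathcal{E})$ the Cappell-Miller torsion $\tau_{[0,\lambda],v}$ is algebraically computable from Section~2, and a direct differentiation using the formula in Proposition \ref{P:acydob1} (after reducing the $\lambda=0$ case away via Proposition \ref{P:indedec}) gives
\begin{equation*}
\frac{d}{dv}\tau_{[0,\lambda],v} \, = \, -\sum_{k=0,1}(-1)^k \operatorname{Tr}\bigl[\alpha_v\big|_{\Omega^{\bar k}_{[0,\lambda]}(M,\mathcal{E})}\bigr]\cdot \tau_{[0,\lambda],v},
\end{equation*}
upon natural identification of the determinant lines.

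Next I would prove the analog of Lemma \ref{L:holvarlem2} for the $\zeta$-regularized piece. The Mellin transform gives $\LDet_\theta(\nabla^{\mathcal{H},\sharp}\nabla^\mathcal{H})|_{\Omega^{\bar k}_{+,(\lambda,\infty)}}$ as $-\frac{d}{ds}|_{s=0}[\Gamma(s)^{-1}\int_0^\infty t^{s-1}\operatorname{Tr}(e^{-t\nabla^{\mathcal{H},\sharp}\nabla^\mathcal{H}}|_{\Omega^{\bar k}_{+,(\lambda,\infty)}})dt]$. Differentiating in $v$, using the commutator identity for $\frac{d}{dv}\nabla^{\mathcal{H},\sharp}$, cyclicity of trace, and the semigroup property exactly as in the derivation of \eqref{E:traidenti1}--\eqref{E:dduidentiti1}, one converts the integrand to $s\cdot\operatorname{Tr}(\alpha_v \exp(-t\Delta^{\mathcal{H},\sharp}|_{\Omega^{\bar k}_{(\lambda,\infty)}(M,\mathcal{E})}))$. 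Splitting into small-time and large-time pieces after shifting by a small $\varepsilon$ as in \eqref{E:dduidentiti2}, the large-time integral is entire, and the small-time integral has a simple pole at $s=0$ whose residue is precisely the coefficient $b_{m/2,\bar k,v}$ of $t^0$ in the local heat kernel asymptotic expansion of $\operatorname{Tr}(\alpha_v e^{-t\Delta^{\mathcal{H},\sharp}}|_{\Omega^{\bar k}(M,\mathcal{E})})$. The finite-dimensional subtraction yields the compensating trace term on $\Omega^{\bar k}_{[0,\lambda]}(M,\mathcal{E})$, so
\begin{equation*}
\frac{d}{dv}\Bigl[\sum_{k=0,1}(-1)^k\LDet_\theta(\nabla^{\mathcal{H},\sharp}\nabla^\mathcal{H})|_{\Omega^{\bar k}_{+,(\lambda,\infty)}}\Bigr] = \sum_{k=0,1}(-1)^k \operatorname{Tr}\bigl[\alpha_v|_{\Omega^{\bar k}_{[0,\lambda]}(M,\mathcal{E})}\bigr] + \sum_{k=0,1}(-1)^k \int_M b_{m/2,\bar k,v}.
\end{equation*}
Adding the two lemmas the trace terms cancel, giving the stated local variation formula. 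For the odd-dimensional assertion, observe that the small-time expansion of $\operatorname{Tr}(\alpha_v e^{-t\Delta^{\mathcal{H},\sharp}}|_{\Omega^{\bar k}(M,\mathcal{E})})$ is a series in $t^{j-m/2}$, $j\ge 0$; a constant ($t^0$) term requires $j=m/2$, which is impossible when $m$ is odd, so the local density $b_{m/2,\bar k,v}$ vanishes pointwise and $\tau_v(\nabla,\mathcal{H})$ is metric-independent.

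The main technical obstacle is the heat kernel manipulation in the second step: because $\Delta^{\mathcal{H},\sharp}$ is not self-adjoint, one must justify the trace-cyclicity and semigroup-property computations on the spectral subspaces $\Omega^{\bar k}_{\pm,(\lambda,\infty)}(M,\mathcal{E})$ obtained from generalized eigenspaces of a non-self-adjoint operator, and verify that the small-time asymptotics for $\operatorname{Tr}(\alpha_v e^{-t\Delta^{\mathcal{H},\sharp}})$ on these subspaces indeed produce a local density of the claimed form; this is handled by the standard Seeley calculus for non-self-adjoint elliptic operators (as invoked in the $\zeta$-function setup of Subsection 3.3), but the bookkeeping with the $\Pi$-projections and with the shift by $\varepsilon$ needed to make $\Delta^{\mathcal{H},\sharp}+\varepsilon$ invertible requires care.
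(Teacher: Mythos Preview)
Your proposal is correct and is exactly the approach the paper itself indicates: the paper simply states that the proof is similar to that of Theorem~\ref{T:vartheorem1} and omits it, and you have faithfully carried out that analogy, replacing $\hat\star$ by $\Gamma$, $\bar\partial^H_E$ by $\nabla^\mathcal{H}$, and $\bar\partial^{*,H}_{E,D^{1,0}}$ by $\nabla^{\mathcal{H},\sharp}$ throughout Lemmas~\ref{L:holvarlem1} and~\ref{L:holvarlem2}. One small caveat: from $\Gamma^2=\operatorname{Id}$ you actually obtain $\frac{d}{dv}\nabla^{\mathcal{H},\sharp}=-[\alpha_v,\nabla^{\mathcal{H},\sharp}]$ (with a minus sign, matching the sign in the proof of Lemma~\ref{L:holvarlem2}), but this does not affect the argument.
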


For the untwisted case considered in \cite{BZ}, the variation of the torsion can be integrated to an anomaly formula \cite{BZ}.

The proof of the follwoing is similar to Theorem 6.1 of \cite{MW3}. See also Theorem 3.8 of \cite{MW}.
\begin{theorem}\label{T:anvartheorem2}
Let $(\mathcal{E},\nabla)$ be a complex vector bundle over a $m$-dimensional connected oriented closed Riemannian manifold $M$ and $\mathcal{H}$ is a closed odd degree form, other than one form. Under the natural identification of determinant lines and along any one parameter deformation $\mathcal{H}_v$ of $\mathcal{H}$ that fixes the cohomology class $[\mathcal{H}]$, we have the following variation formula
\[
\frac{d}{dv} \tau(\nabla,\mathcal{H}_v) = \big( \sum_{k=0,1}(-1)^k \int_M c_{m/2,\bar{k},v} \big) \cdot \tau(\nabla,\mathcal{H}_v).
\] 
In particular, if the dimension of the manifold $M$ is odd, then, under the natural identification of determinant lines, the twisted Cappell-Miller analytic torsion $\tau(\nabla,\mathcal{H})$ is independent of any deformation of $\mathcal{H}$ that fixes the cohomology class $[\mathcal{H}]$.
\end{theorem}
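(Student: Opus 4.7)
The plan is to mirror the proof of Theorem~\ref{T:vartheorem2}, working in the de Rham setting rather than the Dolbeault setting. Since the deformation fixes the cohomology class $[\mathcal{H}]$, we may write $\tfrac{d}{dv}\mathcal{H}_v = -d\mathcal{B}_v$ for a smooth family $\mathcal{B}_v$ of even-degree forms on $M$. Setting $\beta := \mathcal{B}_v \wedge \cdot$, a direct computation yields
\[
\frac{d}{dv}\nabla^{\mathcal{H}_v} \, = \, [\beta, \nabla^{\mathcal{H}_v}],
\]
and a parallel identity, up to the conjugation by the chirality operator $\Gamma$, for $\tfrac{d}{dv}\nablahsharp$. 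The starting point is then the decomposition of $\tau(\nabla,\mathcal{H}_v)$ via Definition~\ref{D:cmantorsi} into the finite-dimensional Cappell-Miller torsion $\tau_{[0,\lambda],v}$ attached to $(\Omega^\bullet_{[0,\lambda]}(M,\mathcal{E}),\nabla^{\mathcal{H}_v},\nablahsharp)$ and the $\zeta$-regularized determinant on the high-energy subspaces $\Omega^{\bar k}_{+,(\lambda,\infty)}(M,\mathcal{E})$.

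Step one is the de Rham analogue of Lemma~\ref{L:holvarlem3}: under the natural identifications of determinant lines furnished by $\varepsilon_{\mathcal{B}_v}$ of \eqref{E:epsiso} and its homological counterpart \eqref{E:epsiso12}, one has
\[
\frac{d}{dv}\tau_{[0,\lambda],v} \, = \, -\sum_{k=0,1}(-1)^k \Tr\bigl[\beta \big|_{\Omega^{\bar k}_{[0,\lambda]}(M,\mathcal{E})}\bigr]\cdot \tau_{[0,\lambda],v}.
\]
This is the finite-dimensional cochain computation of \cite[Lemma 3.7]{MW} and \cite[Lemma 4.7]{H1}, now applied to the whole bi-graded complex; the cohomological and homological variations combine additively into a single alternating trace because both $\nabla^{\mathcal{H}_v}$ and $\nablahsharp$ are conjugated by $\varepsilon_{\mathcal{B}_v}$ up to compatible signs.

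Step two is the de Rham analogue of Lemma~\ref{L:holvarlem4}:
\[
\frac{d}{dv}\Big[\sum_{k=0,1}(-1)^k \LDet_\theta(\nablahsharp\nabla^{\mathcal{H}_v})\big|_{\Omega^{\bar k}_{+,(\lambda,\infty)}(M,\mathcal{E})}\Big] \, = \, \sum_{k=0,1}(-1)^k \Tr\bigl[\beta \big|_{\Omega^{\bar k}_{[0,\lambda]}(M,\mathcal{E})}\bigr] \, + \, \sum_{k=0,1}(-1)^k \int_M c_{m/2,\bar{k},v}.
\]
The derivation follows the Mellin-transform calculation of Lemma~\ref{L:holvarlem4} verbatim: differentiate the $\zeta$-function under the heat integral, apply the cyclicity $\Tr(AB)=\Tr(BA)$ together with the semigroup property to rewrite the integrand as a $t$-derivative, integrate by parts, split the Mellin integral at $t=1$, and read off the residue at $s=0$ as the $t^0$ coefficient in the small-time expansion of $\Tr(\beta\exp(-t\Delta^{\mathcal{H}_v,\sharp}))$; this coefficient, being the integral of a local geometric density, produces the claimed local form $c_{m/2,\bar k,v}$. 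Adding step one and step two, the finite-dimensional trace contributions cancel exactly, yielding the claimed variation formula.

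For the odd-dimensional conclusion, the standard parity argument applies: when $m$ is odd, the small-time asymptotic expansion $\Tr(\beta\exp(-t\Delta^{\mathcal{H}_v,\sharp}))\sim \sum_j a_j\, t^{(j-m)/2}$ has its $t^0$ term at $j=m$, and the corresponding local density is a top-degree heat invariant of an operator of Laplace type on an odd-dimensional Riemannian manifold, which vanishes pointwise (the same mechanism that gives $\zeta(0)=0$ for the scalar Laplacian in odd dimensions). Hence $c_{m/2,\bar k,v}\equiv 0$ and $\tau(\nabla,\mathcal{H}_v)$ is constant in $v$. The main obstacle I anticipate is the sign bookkeeping for the identification of determinant lines via $\varepsilon_{\mathcal{B}_v}$ together with the Poincar\'{e}-duality isomorphism \eqref{E:poin} used in \eqref{E:isomo}: one must check that the natural identification arising from the deformation matches the identification built into Definition~\ref{D:cmantorsi}, so that the finite-dimensional trace produced in step one cancels precisely the one appearing in step two without leaving any residual geometric term that would spoil the locality of the answer.
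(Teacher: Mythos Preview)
Your proposal is correct and follows essentially the same route as the paper. The paper's own proof consists only of the reference ``similar to Theorem~6.1 of \cite{MW3}. See also Theorem~3.8 of \cite{MW}'', and your two-step argument---the finite-dimensional Cappell--Miller torsion variation (de Rham analogue of Lemma~\ref{L:holvarlem3}) plus the Mellin-transform computation of the $\zeta$-determinant variation (de Rham analogue of Lemma~\ref{L:holvarlem4}), with the trace terms cancelling---is precisely the method those references use and the method the paper itself carries out in detail for Theorem~\ref{T:vartheorem2}. Your concern about the identification of determinant lines is well placed but resolvable: since $\nablahsharp=\Gamma\nabla^{\mathcal{H}_v}\Gamma$, the homology identification reduces via $\Gamma$ to the cohomological one, so both factors are identified by $\varepsilon_{\mathcal{B}_v}$ and the cancellation is exact.
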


\subsection{Relationship with the twisted refined analytic torsion}
In this subsection we assume that $M$ is a closed compact oriented manifold of odd dimension. Recall that in \cite[(3.13)]{H1}, for each $\lambda > 0$, we define the twisted refined torsion $\rho_{\Gamma_{[0,\lambda]}}$ of the twisted finite dimensional complex $(\Omega^\bullet_{[0,\lambda]}(M,\mathcal{E}),\nabla^\mathcal{H})$ corresponding to the chirality operator $\Gamma_{[0,\lambda]}$. In our setting, as in the $\mathbb{Z}$-graded case, \cite[(5.1)]{BK6}, the twisted Cappell-Miller torsion can be described as, \eqref{E:tauoddre},
\begin{equation}\label{E:refinedcmtorrel1}
\tau_{[0,\lambda]}:= \rho_{\Gamma_{[0,\lambda]}} \otimes \rho_{\Gamma_{[0,\lambda]}} \in \Det H^\bullet(M,\mathcal{E},\mathcal{H}) \otimes \Det H^\bullet(M,\mathcal{E},\mathcal{H}).
\end{equation} 
 
By combining (3.14), (3.20), (5.28) and Definition 4.5 in \cite{H1}, the twisted refined analytic torsion can be written as
\begin{equation}\label{E:refinedcmtorrel2}
\rho_{\operatorname{an}}(\nabla^\mathcal{H})= \pm \rho_{\Gamma_{[0,\lambda]}} \cdot \prod_{k=0,1}{ \big(\, \Det_\theta (\nabla^{\mathcal{H},\sharp}\nabla^\mathcal{H})\big|_{\Omega^{\bar k}_{+,(\lambda,\infty)}(M,\mathcal{E})}\,\big)^{-\frac{k}{2}}} \cdot \exp \big( -i \pi (\eta(\mathcal{B}^\mathcal{H}_{\bar 0}(\nabla^\mathcal{H}))- \operatorname{rank}\mathcal{E} \cdot \eta_{\operatorname{trivial}})  \big),
\end{equation}  
where $\eta(\mathcal{B}^\mathcal{H}_{\bar 0}(\nabla^\mathcal{H}))- \operatorname{rank}\mathcal{E} \cdot \eta_{\operatorname{trivial}}$ is the $\rho$-invariant of the twisted odd signature operator $\mathcal{B}^\mathcal{H}_{\bar 0}(\nabla^\mathcal{H})$, defined in \cite[(3.2)]{H1}.

By combining \eqref{E:cmttordef}, \eqref{E:refinedcmtorrel1} with \eqref{E:refinedcmtorrel2}, we have the following comparison theorem of the twisted Cappell-Miller analytic torsion and twisted refined analytic torsion.
\begin{theorem}\label{T:cmrefantorcomp}
Let $(\mathcal{E},\nabla)$ be a complex vector bundle over a connected oriented odd-dimensional closed Riemannian manifold $M$ and $\mathcal{H}$ is a closed odd degree form, other than one form. Further, let $\nabla^\mathcal{H} = \nabla + \mathcal{H} \wedge \cdot$. Then
\[
\tau(\nabla,\mathcal{H}) \cdot \exp \big( -2i \pi (\eta(\mathcal{B}^\mathcal{H}_{\bar 0}(\nabla^\mathcal{H}))- \operatorname{rank}\mathcal{E} \cdot \eta_{\operatorname{trivial}})  \big) = \rho_{\operatorname{an}}(\nabla^\mathcal{H}) \otimes \rho_{\operatorname{an}}(\nabla^\mathcal{H}).
\]
\end{theorem}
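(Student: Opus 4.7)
The proof is essentially a bookkeeping exercise: the three formulas \eqref{E:cmttordef}, \eqref{E:refinedcmtorrel1}, \eqref{E:refinedcmtorrel2} already encode everything, and one only has to assemble them and square the refined torsion. The plan is as follows.

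First, I would fix $\lambda>0$ such that the spectral decomposition $\Omega^{\bullet}(M,\mathcal{E})=\Omega^{\bullet}_{[0,\lambda]}(M,\mathcal{E})\oplus \Omega^{\bullet}_{(\lambda,\infty)}(M,\mathcal{E})$ used in both constructions is available, and an Agmon angle $\theta\in(0,2\pi)$ for $\Delta^{\mathcal{H},\sharp}$. By Theorem \ref{T:anvartheorem1} (and its flux analogue, Theorem \ref{T:anvartheorem2}) together with the metric/flux independence of the refined analytic torsion in odd dimensions proved in \cite{H1}, both sides of the claimed identity live naturally in
\[
\Det H^\bullet(M,\mathcal{E},\mathcal{H})\otimes \Det H^\bullet(M,\mathcal{E},\mathcal{H}),
\]
so no further identification of determinant lines is needed beyond those already built into \eqref{E:tauoddre}.

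Next, I would substitute \eqref{E:refinedcmtorrel1} into \eqref{E:cmttordef} to obtain
\[
\tau(\nabla,\mathcal{H})\;=\;\bigl(\rho_{\Gamma_{[0,\lambda]}}\otimes\rho_{\Gamma_{[0,\lambda]}}\bigr)\cdot\prod_{k=0,1}\bigl(\Det_\theta(\nabla^{\mathcal{H},\sharp}\nabla^\mathcal{H})\big|_{\Omega^{\bar k}_{+,(\lambda,\infty)}(M,\mathcal{E})}\bigr)^{-k}.
\]
Independently, I would tensor-square \eqref{E:refinedcmtorrel2}; since the ambiguous sign $\pm$ appears as a square, it contributes $+1$, while each factor of $(\Det_\theta\,\cdot\,)^{-k/2}$ appears twice and becomes $(\Det_\theta\,\cdot\,)^{-k}$, and the exponential factor doubles to $\exp\!\bigl(-2i\pi(\eta(\mathcal{B}^\mathcal{H}_{\bar 0}(\nabla^\mathcal{H}))-\operatorname{rank}\mathcal{E}\cdot\eta_{\operatorname{trivial}})\bigr)$. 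Comparing the two expressions term by term yields
\[
\rho_{\operatorname{an}}(\nabla^\mathcal{H})\otimes\rho_{\operatorname{an}}(\nabla^\mathcal{H})\;=\;\tau(\nabla,\mathcal{H})\cdot\exp\!\bigl(-2i\pi(\eta(\mathcal{B}^\mathcal{H}_{\bar 0}(\nabla^\mathcal{H}))-\operatorname{rank}\mathcal{E}\cdot\eta_{\operatorname{trivial}})\bigr),
\]
which is the assertion of the theorem.

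The only real subtlety is checking that the two constructions use \emph{the same} choice of $\lambda$ and $\theta$, and that the chirality operator $\Gamma_{[0,\lambda]}$ appearing in \eqref{E:refinedcmtorrel1} is the same one used to define $\rho_{\operatorname{an}}$ in \eqref{E:refinedcmtorrel2}; this is built into the odd-dimensional setting via the normalization of $\Gamma$ in \eqref{E:chirality}. Once that compatibility is verified, and one confirms that the sign ambiguity in \eqref{E:refinedcmtorrel2} is genuinely squared away under the tensor product (the $\pm$ refers to a choice that affects $\rho_{\operatorname{an}}$ up to sign but whose square is independent of it), the identity follows by direct substitution with no further analytic input. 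The only step where one must be mildly careful is the matching of determinant lines on both sides, but this is handled precisely by \eqref{E:isomo} in the odd-dimensional case, where Poincar\'e duality identifies $\Det H^\bullet(M,\mathcal{E}',\mathcal{H})$ with $\Det H^\bullet(M,\mathcal{E},\mathcal{H})$.
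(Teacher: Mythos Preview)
Your proposal is correct and follows essentially the same approach as the paper: the paper's proof consists precisely of the sentence ``By combining \eqref{E:cmttordef}, \eqref{E:refinedcmtorrel1} with \eqref{E:refinedcmtorrel2}, we have the following comparison theorem,'' which is exactly the substitution-and-square argument you describe. Your additional remarks about compatibility of $\lambda$, $\theta$, $\Gamma_{[0,\lambda]}$, the sign squaring, and the determinant-line identification are correct elaborations but are not needed beyond what is already implicit in the formulas being combined.
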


Note that in Theorem 5.1 of \cite{Su}, Su compared the twisted Burghelea-Haller analytic torsion introduced in \cite{Su} with the twisted refined analytic torsion. By combining Theorem 5.1 of \cite{Su} with Theorem \ref{T:cmrefantorcomp}, we can also obtain the comparison theorem of the twisted Burghelea-Haller torsion and the twisted Cappell-Miller analytic torsion. We skip the details.


\end{document}